\DeclarePairedDelimiter{\ceil}{\lceil}{\rceil}
\DeclarePairedDelimiter{\floor}{\lfloor}{\rfloor}
\providecommand{\N}{\mathbb{N}}
\providecommand{\R}{\mathbb{R}}
\providecommand{\vol}{\mbox{vol}}
\providecommand{\abs}[1]{\left\vert#1\right\vert}
\providecommand{\set}[1]{\left\{#1\right\}}
\providecommand{\floor}[1]{\lfloor\{#1\rfloor\}}
\providecommand{\paren}[1]{\left( #1 \right)}
\providecommand{\brac}[1]{\left [ #1 \right]}
\providecommand{\set}[1]{\left { #1 \right}}
\newcommand{\s}[1]{\begin{equation*} \begin{split} #1 \end{split} \end{equation*}}
\providecommand{\eqref}[1]{\left(\ref{#1}\right)}
\newtheorem{theorem}{Theorem}
\newtheorem{Lemma}{Lemma}
\newtheorem{Definition}{Definition}
\newtheorem{Proposition}{Proposition}
\DeclareMathOperator{\rank}{rank}
\newcommand{\PH}{\mathit{PH}}
\begin{document}
\title[Weighted Persistent Homology Sums of Random \v{C}ech Complexes]{Weighted Persistent Homology Sums \\ of Random \v{C}ech Complexes}
\author{Benjamin Schweinhart}
\date{July 2018}

\begin{abstract}
We study the asymptotic behavior of random variables of the form 
\[E_{\alpha}^i\paren{x_1,\ldots,x_n}=\sum_{\left(b,d\right)\in \PH_i\paren{x_1,\ldots,x_n}} \paren{d-b}^{\alpha}\]
where $\set{x_j}_{j\in\N}$ are i.i.d. samples from a probability measure on a triangulable metric space, and $\PH_i\paren{x_1,\ldots,x_n}$ denotes the $i$-dimensional reduced persistent homology of the \v{C}ech complex of $\set{x_1,\ldots,x_n}.$ These quantities are a higher-dimensional generalization of the $\alpha$-weighted sum of a minimal spanning tree; we seek to prove analogues of the theorems of Steele~\cite{1988steele} and Aldous and Steele~\cite{1992aldous} in this context.

As a special case of our main theorem, we show that if $\set{x_j}_{j\in\N}$ are distributed independently and uniformly on the $m$-dimensional Euclidean sphere, $\alpha<m,$ and $0\leq i <n,$ then there are real numbers $\gamma$ and $\Gamma$ so that 
\[ \gamma \leq \lim_{n\rightarrow\infty} n^{-\frac{m-\alpha}{m}}  E_i^{\alpha}\paren{x_1,\ldots,x_n} \leq \Gamma\]
in probability. More generally, we prove results about the asymptotics of the expectation of $E_\alpha^i$ for points sampled from a locally bounded probability measure on a space that is the bi-Lipschitz image of an $m-$dimensional Euclidean simplicial complex.
\end{abstract}
\maketitle

\section{Introduction}

We are interested in random variables of the form
\[E_{\alpha}^i\paren{x_1,\ldots,x_n}=\sum_{\left(b,d\right)\in \PH_i\paren{x_1,\ldots,x_n}}  \paren{d-b}^{\alpha}\]
where $\set{x_j}_{j\in\N}$ are independent samples drawn from a probability measure on a triangulable metric space, and $\PH_i\paren{x_1,\ldots,x_n}$ denotes the $i$-dimensional reduced persistent homology of the \v{C}ech complex of $\set{x_1,\ldots,x_n}.$ The special case $i=0$ is, under a different guise, already the subject of an expansive literature in probabilistic combinatorics; $E_\alpha^0\paren{\textbf{x}}$ gives the $\alpha$-weight of the minimal spanning tree on a finite subset of a metric space $\textbf{x},$ $T\paren{\textbf{x}}:$
\[E_{\alpha}^0\paren{\textbf{x}}=2^{-\alpha}\sum_{e\in T\paren{\textbf{x}}}\abs{e}^\alpha\]

In 1988, Steele~\cite{1988steele} showed the following:

\begin{theorem}[Steele]
Let $\mu$ is a compactly supported probability distribution on $\R^m,$ and let $\set{x_n}_{n\in\N}$ be i.i.d. samples from $\mu.$ If $\alpha<m,$
\[\lim_{n\rightarrow\infty} n^{-\frac{m-\alpha}{m}} E_\alpha^0\paren{x_1,\ldots,x_n} \rightarrow c\paren{\alpha,m}\int_{\R^d}f\paren{x}^{\paren{m-\alpha}/m}\]
with probability one, where $f\paren{x}$ is the probability density of the absolutely continuous part of $\mu,$ and $c\paren{\alpha,m}$ is a positive constant that depends only on $\alpha$ and $m.$
\end{theorem}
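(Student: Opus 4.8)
The plan is to treat the power-weighted length of the Euclidean minimal spanning tree, $L_\alpha\paren{x_1,\ldots,x_n}:=2^\alpha E_\alpha^0\paren{x_1,\ldots,x_n}=\sum_{e\in T\paren{x_1,\ldots,x_n}}\abs{e}^\alpha$, as a \emph{subadditive Euclidean functional} and to run the classical two-step argument behind theorems of Beardwood--Halton--Hammersley type: first prove the limit for i.i.d.\ uniform samples on the unit cube, then bootstrap to an arbitrary compactly supported $\mu$ by a Riemann-sum approximation of the density $f$, checking along the way that the singular part of $\mu$ contributes only a lower-order term. The statement for $E_\alpha^0$ follows by rescaling the resulting constant by $2^{-\alpha}$.

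\emph{Uniform case.} I would first record three structural properties of $L_\alpha$. It is translation invariant and homogeneous of degree $\alpha$, $L_\alpha\paren{tA+y}=t^\alpha L_\alpha\paren{A}$, since the MST depends only on the metric and its edge lengths scale linearly. It is \emph{subadditive}: partitioning a cube $Q$ of side $t$ into $k^m$ congruent subcubes $Q_1,\ldots,Q_{k^m}$, the union of the minimal spanning trees of the $A\cap Q_j$ is a spanning forest of $A\cap Q$ with at most $k^m$ components, and joining these into a single tree by short edges between points of adjacent subcubes costs at most $C\paren{m}t^\alpha k^{m-\alpha}$, so
\[ L_\alpha\paren{A\cap Q}\ \le\ \sum_{j=1}^{k^m} L_\alpha\paren{A\cap Q_j}\ +\ C\paren{m}\,t^\alpha k^{m-\alpha}. \]
Finally it is \emph{smooth}: since in a fixed dimension each vertex meets a bounded number of MST edges and every edge of $A\subseteq Q$ has length at most $\mathrm{diam}\paren{Q}$, inserting or deleting one point of $A$ changes $L_\alpha\paren{A}$ by at most $C\paren{m}t^\alpha$, so for $U_1,\ldots,U_n$ i.i.d.\ uniform on $\brac{0,1}^m$ the bounded-differences inequality gives $\mathbb{P}\paren{\abs{L_\alpha\paren{U_1,\ldots,U_n}-\mathbb{E}L_\alpha}>s}\le 2\exp\paren{-cs^2/n}$. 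Applying subadditivity to the $k^m$ conditionally-uniform subsamples, taking expectations, and using this concentration together with the sublinear growth of $\phi$ to control the deviation of the subsample sizes from $n/k^m$, I obtain that $\phi\paren{n}:=\mathbb{E}L_\alpha\paren{U_1,\ldots,U_n}$ satisfies $\phi\paren{n}\le k^{m-\alpha}\phi\paren{\lceil n/k^m\rceil}\paren{1+o\paren{1}}+O\paren{k^{m-\alpha}}$; dividing by $n^{\paren{m-\alpha}/m}$ and sending $n\to\infty$ then $k\to\infty$ makes $\limsup_n \phi\paren{n}n^{-\paren{m-\alpha}/m}$ finite and equal to the corresponding $\liminf$, provided a matching lower bound holds. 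That lower bound comes from the boundary-rooted variant $L_\alpha^B$, which is superadditive under the same partition and for which $\mathbb{E}\abs{L_\alpha-L_\alpha^B}=o\paren{n^{\paren{m-\alpha}/m}}$. Hence $\phi\paren{n}/n^{\paren{m-\alpha}/m}\to c\paren{\alpha,m}$ for a finite positive constant, and the bounded-differences bound with Borel--Cantelli (interpolating between consecutive $n$) upgrades this to almost-sure convergence of $L_\alpha\paren{U_1,\ldots,U_n}/n^{\paren{m-\alpha}/m}$.

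\emph{General $\mu$.} Write $\mu=\mu_{ac}+\mu_s$ with $\diff\mu_{ac}=f\,\diff x$, fix a large $k$, and partition a cube containing $\mathrm{supp}\,\mu$ into subcubes $Q_j$ of side $1/k$. Conditionally on the point counts, the samples in $Q_j$ are independent with law $\mu|_{Q_j}$; after rescaling $Q_j$ to the unit cube the a.c.\ part there behaves like $n\mu\paren{Q_j}$ samples from a density near the constant $\bar f_j=\mu_{ac}\paren{Q_j}/\vol\paren{Q_j}$, so by the uniform case and homogeneity it contributes $c\paren{\alpha,m}\bar f_j^{\paren{m-\alpha}/m}\vol\paren{Q_j}n^{\paren{m-\alpha}/m}\paren{1+o\paren{1}}$. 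Summing over $j$ and then letting $k\to\infty$ so the Riemann sum $\sum_j \bar f_j^{\paren{m-\alpha}/m}\vol\paren{Q_j}\to\int_{\R^m}f^{\paren{m-\alpha}/m}$ yields the claimed limit; subadditivity (gluing the per-subcube trees, error $O\paren{k^{m-\alpha}}$) supplies the upper bound and $L_\alpha^B$ the lower bound. The $\mu_s$-samples are treated separately: since $\mu_s$ is carried by a Lebesgue-null set its points in each $Q_j$ cluster at a scale that is $o$ of the typical spacing of the a.c.\ points there, and attaching each such cluster to the rest of the tree costs one edge of length $O\paren{1/k}$ per subcube; because $\alpha<m$ the total is $o\paren{n^{\paren{m-\alpha}/m}}$, and subadditivity bounds their effect on $L_\alpha$ by this.

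\emph{Main obstacle.} The crux is the two-sided control of the inter-subcube interface. The subadditive upper bound is routine; the genuine work is the lower bound, which requires defining the boundary functional $L_\alpha^B$, proving its superadditivity under the cube partition, and --- the essential point --- showing $\mathbb{E}\abs{L_\alpha-L_\alpha^B}=o\paren{n^{\paren{m-\alpha}/m}}$ so that the subadditive and superadditive limits agree. A secondary difficulty is making the concentration strong enough to pass from convergence in mean to almost-sure convergence: the bounded-difference constant must be shown uniform in $n$, which uses the fixed-dimension bound on MST vertex degrees and, when $\alpha$ is near $m$, a careful accounting of how a single point can perturb many short edges. Finally, some care is needed to confirm that the singular part truly contributes only a lower-order term rather than, say, a comparable one through long attaching edges, which is why only one connecting edge per cluster is used above.
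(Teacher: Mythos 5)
The paper does not prove this statement: it quotes Steele's 1988 theorem with a citation, purely as motivation for the higher--dimensional analogues it actually studies, so there is no in-paper proof to compare against. Judged on its own terms, your sketch follows the correct skeleton --- the subadditive Euclidean functional route (subadditivity over a cube partition, a superadditive boundary-rooted companion $L_\alpha^B,$ closeness of the two in mean, then a Riemann-sum localization over subcubes for a general compactly supported $\mu$), and you rightly flag $\mathbb{E}\abs{L_\alpha-L_\alpha^B}=o\paren{n^{\paren{m-\alpha}/m}}$ as the crux of the convergence of means. But there is a genuine quantitative gap in the step that is supposed to deliver almost-sure convergence. The worst-case one-point increment of $L_\alpha$ on the unit cube is indeed $O\paren{1}$ uniformly in $n$ (delete the point, reconnect its boundedly many MST neighbors with edges of length at most $\sqrt{m}$), so uniformity of the bounded-difference constant is not the difficulty you identify --- the difficulty is that McDiarmid with a constant increment gives $\mathbb{P}\paren{\abs{L_\alpha-\mathbb{E}L_\alpha}>s}\leq 2\exp\paren{-cs^2/n},$ and at the relevant scale $s=\epsilon n^{\paren{m-\alpha}/m}$ the exponent is $-c\epsilon^2 n^{\paren{m-2\alpha}/m}.$ This is summable only for $\alpha<m/2$; for $\alpha>m/2$ the bound tends to $2$ and yields nothing, not even convergence in probability, on the range $m/2\leq\alpha<m$ that the theorem covers. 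The standard repair is to replace worst-case differences by local ones: the change caused by resampling one point is $O\paren{\rho^\alpha}$ where $\rho$ is a nearest-neighbor-type distance, typically of order $n^{-1/m},$ so Efron--Stein gives $\mathrm{Var}\paren{L_\alpha}=O\paren{n^{\paren{m-2\alpha}/m}}=O\paren{n^{-1}\paren{\mathbb{E}L_\alpha}^2},$ after which Chebyshev along the subsequence $n_k=k^2$ plus the $O\paren{1}$ add-one-point smoothness interpolates to all $n.$ Without some version of this, your argument does not establish the stated almost-sure (or even in-probability) limit.

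A secondary but real problem is the singular part. Your claim that the $\mu_s$-points in each subcube ``cluster at a scale that is $o$ of the typical spacing of the a.c.\ points,'' so that one attaching edge per cluster suffices, is false in general: if $\mu_s$ has positive mass it contributes $\Theta\paren{n}$ points, and if it is, say, uniform on a piece of hyperplane, those points form no small clusters and their internal spanning cost is $\Theta\paren{n^{\paren{m-1-\alpha}/\paren{m-1}}}$ --- which does happen to be $o\paren{n^{\paren{m-\alpha}/m}},$ but not by your accounting. The correct argument covers all but $\epsilon$ of the mass of $\mu_s$ by cubes of total volume at most $\epsilon,$ applies the growth bound $L_\alpha\paren{k\text{ points in a cube of side }t}\leq Ct^\alpha k^{\paren{m-\alpha}/m}$ in each cube, and uses H\"older to bound the total contribution by $C\epsilon^{\alpha/m}n^{\paren{m-\alpha}/m}.$ With these two steps repaired, your outline is essentially Steele's proof.
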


In 1992, Aldous and Steele~\cite{1992aldous} showed that if $\set{x_i}_{i\in\N}$ sampled independently from the uniform distribution on the unit cube in $\R^m,$ then
\[\lim_{n\rightarrow\infty} E_\alpha^m\paren{x_1,\ldots,x_n} \rightarrow c\paren{d,d}\]
in the $L^2$ sense. Under the same hypotheses, Kesten and Lee proved the following central limit theorem in 1996~\cite{1996kesten}:
\[\frac{E_{\alpha}^{0}\paren{X_1,\ldots,X_n}-\mathbb{E}\paren{E_{\alpha}^{0}\paren{X_1,\ldots,X_n}}}{n^{m-2\alpha}{2d}}\rightarrow N\paren{0,\sigma^2_{\alpha,d}}\]
in distribution, for any $\alpha>0.$ Here, we take the first step toward a higher-dimensional generalization of these celebrated results. 

Another special case of $E_{\alpha}^i\paren{\textbf{x}}$ --- $\alpha=1$ --- gives the total lifetime persistence of $\textbf{x}.$  Random variables of the form $E_{1}^i\paren{\textbf{x}}$ have been investigated by  Hiraoka and Shirai~\cite{2017Hiraoka} in the context of Linial---Meshulam processes. They showed that if $X$ is sampled from the $m$-Linial---Meshulam process then 
\[\mathbb{E}\paren{E_1^{m-1}\paren{X}} \in O\paren{n^{m-1}}\]
which is a higher-dimensional generalization of Frieze's $\zeta\paren{3}$-theorem for Erd\'{o}s---R\'{e}nyi random graphs~\cite{1985frieze}. Also, Adams et al.~\cite{2018adams} studied the behavior of the lifetime persistence of random measures on Euclidean space, performing computational experiments and conjecturing the existence of a limit function capturing finer properties of the persistent homology. 

The properties of $E_\alpha^i\paren{\textbf{x}}$ for general $i$ and $n$ have until now, as far as we know, not been studied in a probabilistic context (see the note at the end of the introduction). However, some work has been done in the extremal context. In 2010, Cohen-Steiner et al.~\cite{2010cohensteiner} showed that if $M$ is the bi-Lipschitz image of an $m$-dimensional simplicial complex and $\alpha>m,$ then $E_i^{\alpha}\paren{X}$ is uniformly bounded for $X\subset M.$ We use their results to prove the upper bounds in Section~\ref{sec_upper_bound}. Furthermore, in our previous paper~\cite{2018schweinhart} we related the upper box dimension of a subset $X$ of a metric space to the behavior of  $E_{\alpha}^i\paren{Y}$ for extremal subsets $Y\subset X.$ We will say more about the relation of this to the present work in Section~\ref{sec_ph_dim}.

\subsection{Our Results}
The following are special cases of our main theorem:
\begin{theorem}
\label{thm_sphere}
Let $\set{x_j}_{j\in\N}$ be be distributed independently and uniformly on the $S^n.$  If $\alpha<m,$ $0\leq i <n,$ and persistent homology is taken with respect to the intrinsic metric on $S^n,$
\[ \gamma \leq \lim_{n\rightarrow\infty} n^{-\frac{m-\alpha}{m}}  E_i^{\alpha}\paren{x_1,\ldots,x_n} \leq \Gamma\]
in probability, where $\gamma$ and $\Gamma$ are constants that depend on $\mu$ and $\alpha.$ 

Furthermore, there exists a $D\in\R$ so that
\[\lim_{n\rightarrow\infty}\frac{1}{\log\paren{n}}E_i^{m}\paren{x_1,\ldots,x_n} \leq D\]
in probability.
\end{theorem}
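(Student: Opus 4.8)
\begin{sproof}
Both displays follow by specializing our main theorem to the round sphere and then upgrading its conclusions about $\mathbb{E}\paren{E_i^\alpha}$ to convergence in probability, so I describe the plan in that order. To invoke the main theorem I first verify its hypotheses for $\paren{S^m,d_{\mathrm{geo}}}$ with the uniform measure. Realize $S^m$ as the unit sphere in $\R^{m+1}$ and let $P=\partial\Delta^{m+1}$ be the boundary of a regular $\paren{m+1}$-simplex centered at the origin, a compact $m$-dimensional Euclidean simplicial complex. Radial projection $\pi\colon P\to S^m$, $\pi\paren{x}=x/\norm{x}$, is a homeomorphism whose restriction to each closed face is smooth with nowhere-vanishing Jacobian, hence bi-Lipschitz; since moreover $\tfrac{2}{\pi}\norm{x-y}\leq d_{\mathrm{geo}}\paren{x,y}\leq\tfrac{\pi}{2}\norm{x-y}$ on $S^m$, the space $\paren{S^m,d_{\mathrm{geo}}}$ is the bi-Lipschitz image of $P$. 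The pullback under $\pi$ of the uniform probability measure has density with respect to $m$-dimensional Hausdorff measure on $P$ proportional to the Jacobian of $\pi$, which is continuous and bounded above and below by positive constants on each face, so this measure is locally bounded (indeed comparable to Hausdorff measure). The main theorem then supplies constants $\gamma',\Gamma',D'$ depending only on $m$ and $\alpha$ with $\gamma'\, n^{\frac{m-\alpha}{m}}\leq\mathbb{E}\paren{E_i^\alpha\paren{x_1,\ldots,x_n}}\leq\Gamma'\, n^{\frac{m-\alpha}{m}}$ when $\alpha<m$, and $\mathbb{E}\paren{E_i^m\paren{x_1,\ldots,x_n}}\leq D'\log n$, for all large $n$.

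Next I would show that $E_i^\alpha$ concentrates on the scale of its mean. Write $X=\set{x_1,\ldots,x_n}$ and let $X^{(j)}$ denote $X$ with $x_j$ independently resampled. Because $x_j$ is the only point of $X$ missing from $X\setminus\set{x_j}$, the Hausdorff distance between these two sets equals the nearest-neighbor distance $\rho_j$ of $x_j$; combining stability of persistence diagrams with the boundedness of $S^m$ (every bar of $\PH_i$ has length at most $\mathrm{diam}\paren{S^m}=\pi$), one aims for a one-point fluctuation bound $\abs{E_i^\alpha\paren{X}-E_i^\alpha\paren{X^{(j)}}}\leq C\paren{\rho_j^{\beta}+\paren{\rho_j'}^{\beta}}$ for an exponent $\beta=\beta\paren{\alpha,m}>\alpha-\tfrac{m}{2}$. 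Feeding this into the Efron--Stein inequality together with the standard estimate $\mathbb{E}\paren{\rho_j^{2\beta}}\lesssim n^{-2\beta/m}$ gives $\mathrm{Var}\paren{E_i^\alpha}\lesssim n^{1-2\beta/m}=o\paren{n^{\frac{2(m-\alpha)}{m}}}$, and likewise $\mathrm{Var}\paren{E_i^m}=o\paren{\paren{\log n}^2}$ when $\alpha=m$. Chebyshev's inequality then yields, for every $\varepsilon>0$, $\mathbb{P}\paren{n^{-\frac{m-\alpha}{m}}E_i^\alpha\notin\brac{\gamma'-\varepsilon,\,\Gamma'+\varepsilon}}\to 0$ and $\mathbb{P}\paren{\tfrac{1}{\log n}E_i^m>D'+\varepsilon}\to 0$, which are exactly the assertions of the theorem with $\gamma=\gamma'$, $\Gamma=\Gamma'$, $D=D'$.

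The main obstacle is establishing the one-point fluctuation bound with an exponent $\beta$ above the threshold $\alpha-\tfrac{m}{2}$ for the entire range $\alpha<m$. The crude bound from bottleneck stability controls the change in $E_i^\alpha$ only by the number of bars of $\PH_i$ times $\rho_j^{\min(\alpha,1)}$, and that count is far too large; the $L^p$-stability theorem for total persistence of Cohen-Steiner et al.~\cite{2010cohensteiner} is sharp only for exponents exceeding the dimension, so it does not apply here either. I would instead estimate the change in $E_i^\alpha$ through the approximate subadditivity of the functional over a fine partition of $S^m$ into pieces on which it is nearly flat --- the same structure driving the proof of the main theorem --- so that a one-point modification affects only the few pieces near $x_j$; tracking the dependence of the resulting constants on the diameter and on the Lipschitz constants is the delicate part. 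If a bound valid throughout $\alpha<m$ proves elusive, one can still recover the two upper estimates in the weaker sense of tightness directly from the first-moment bounds by Markov's inequality.
\end{sproof}
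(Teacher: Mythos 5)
Your reduction of the sphere to the main theorem is sound and matches the paper's setup: $S^m$ is the bi-Lipschitz image of the boundary complex of a simplex, and the uniform measure is locally bounded, so Propositions~\ref{prop_sharpUpper} and~\ref{prop_lower} apply. Note, however, that even at the level of expectations you still owe a verification that the uniform measure on $S^m$ has linear $\PH_i$ expectation; without it the main theorem gives only the upper bound $\Gamma\,\mathbb{E}\paren{\abs{\PH_i\paren{\textbf{x}_n}}}^{\frac{m-\alpha}{m}}$, not $\Gamma'\,n^{\frac{m-\alpha}{m}}$.

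The genuine gap is in the upgrade to convergence in probability. You route everything through a variance bound for $E_\alpha^i$ itself via Efron--Stein, and you concede that the required one-point fluctuation estimate with exponent $\beta>\alpha-\tfrac{m}{2}$ is not established; that is precisely the hard step, and your fallback via Markov yields only tightness, which is weaker than the single-constant statement $\mathbb{P}\paren{n^{-\frac{m-\alpha}{m}}E_\alpha^i>\Gamma+\varepsilon}\rightarrow 0$ asserted in the theorem. The paper never controls $\mathrm{Var}\paren{E_\alpha^i}$. Instead it exploits the fact that Lemma~\ref{lemma_upper} is a \emph{deterministic, pathwise} inequality, $E_\alpha^i\paren{X}\leq C_1\abs{\PH_i\paren{X}}^{\frac{m-\alpha}{m}}$ for every $X\subseteq M$, so for the upper bound in probability it suffices that $\abs{\PH_i\paren{\textbf{x}_n}}/n$ be bounded in probability. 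For the sphere this follows from Equation~\ref{eqn_delaunay}: the Alpha complex realizes $\PH_i$ on the simplices of the spherical Delaunay triangulation, i.e.\ the spherical convex hull, whose face count has linear expectation and linear variance by~\cite{2014stemeseder}; Chebyshev applied to that face count is exactly the ``linear $\PH_i$ variance'' route in Proposition~\ref{prop_sharpUpper}. The lower bound in probability likewise needs no variance control of $E_\alpha^i$: Proposition~\ref{prop_lower} produces, with probability tending to one, at least $\gamma_1 n$ intervals straddling $\paren{\omega b,\omega d}$ with $\omega$ of order $n^{-1/m}$, via binomial concentration of the occupancy events, and this directly yields the one-sided in-probability bound. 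So your overall architecture --- concentrating $E_\alpha^i$ around its mean --- is both harder than necessary and, as written, incomplete at its crucial step.
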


\begin{theorem}
\label{thm_ball}
Let $\set{x_j}_{j\in\N}$ be be distributed independently and uniformly on an $m$-dimensional Euclidean ball.  If $\alpha<m,$ $0\leq i <n,$
\[ \gamma \leq \lim_{n\rightarrow\infty} n^{-\frac{m-\alpha}{m}}  \mathbb{E}\paren{E_i^{\alpha}\paren{x_1,\ldots,x_n}} \leq \Gamma\]
where $\gamma$ and $\Gamma$ are constants that depend on $\mu$ and $\alpha.$ In fact, the lower bound holds in probability. 

Furthermore, there exists a $D\in\R$ so that
\[\lim_{n\rightarrow\infty}\frac{1}{\log\paren{n}}\mathbb{E}\paren{E_i^{m}\paren{x_1,\ldots,x_n}} \leq D\]
\end{theorem}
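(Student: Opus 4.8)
The plan is to prove Theorem~\ref{thm_ball} by transferring the results of Theorem~\ref{thm_sphere} and, more fundamentally, by establishing the expectation asymptotics directly for a locally bounded measure on a bi-Lipschitz image of an $m$-dimensional Euclidean simplicial complex — which is the actual content of the (unstated here) main theorem — and then specializing to the uniform measure on the ball. So the real work is the general statement; the ball and the sphere are corollaries. Let me sketch the general argument.

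\textbf{Upper bound.} For $\alpha < m$, I would partition the bulk of the ball into $\Theta(n)$ subcubes of side length $\Theta(n^{-1/m})$, apply a deterministic subadditivity/partitioning bound for $E_\alpha^i$ across a decomposition of the underlying space (each persistence pair of the global \v{C}ech complex is, up to a bounded multiplicative distortion and a boundary correction, either captured inside one enlarged cell or has death value controlled by the cell diameter), and use the scaling $E_\alpha^i(\lambda X) = \lambda^\alpha E_\alpha^i(X)$ together with the uniform bound of Cohen-Steiner et al.~\cite{2010cohensteiner} applied at scale $\alpha > m$ — no wait, here $\alpha < m$, so instead I use a Morse-inequality/counting bound: the number of $i$-dimensional persistence pairs with death $\geq \epsilon$ is $O(n)$ by a volume argument (each such pair forces $\Omega(\epsilon^m n)$-ish mass, or more carefully is bounded via the number of simplices of the \v{C}ech complex at the relevant scale), and then sum $(d-b)^\alpha$ in dyadic shells of the lifetime. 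Taking expectations and optimizing over the shell decomposition yields the $n^{(m-\alpha)/m}$ rate; the borderline $\alpha = m$ case produces the logarithmic factor because the dyadic sum over $\log n$ scales each contributes $O(1)$. This is essentially the argument behind the upper bounds in Section~\ref{sec_upper_bound}, and I would reuse the Cohen-Steiner bound exactly as the paper says it does.

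\textbf{Lower bound.} Here the point is to exhibit enough persistent homology at the critical scale. I would place a grid of $\Theta(n)$ disjoint balls of radius $r = c\,n^{-1/m}$ in the interior of the $m$-ball; by a Poissonization and local limit argument, with probability bounded below independently of $n$, a given small ball contains a configuration of sample points whose \v{C}ech complex has a nontrivial $i$-cycle with lifetime $\Theta(r)$ — for instance, points near the vertices of a cross-polytope or an appropriate sphere-like configuration realizing $H_i$. Independence across the disjoint balls (after Poissonization and de-Poissonization) gives that a constant fraction of them succeed, so $E_\alpha^i \geq \Theta(n) \cdot \Theta(r^\alpha) = \Theta(n^{(m-\alpha)/m})$ in expectation, and a second-moment / concentration argument upgrades this to a statement in probability. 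For the sphere (Theorem~\ref{thm_sphere}) the same construction works since locally the intrinsic metric on $S^n$ is bi-Lipschitz to Euclidean space; one also needs $i < n$ so that small-scale $i$-cycles are not obstructed by the ambient topology, which is exactly the stated hypothesis.

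\textbf{Main obstacle.} The hard part will be the lower bound — specifically, showing that the locally-created small $i$-cycles actually survive as \emph{persistence pairs of the global complex} with lifetime still $\Theta(r)$, rather than being born and immediately killed, or being filled in at a slightly larger scale by interaction with nearby points outside the designated ball. This requires a stability/locality argument for persistent homology: a cycle supported in a ball of radius $r$ that is not a boundary at scale $\lesssim r$ within that ball remains non-bounding in the global complex until scale $\Theta(r)$, provided the ball is surrounded by an annular "moat" that is either empty of sample points or whose points cannot contribute a filling chain — controlling this moat event while keeping the balls only $n^{-1/m}$ apart, so that $\Theta(n)$ of them fit, is the delicate balance. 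I expect to handle it by making the moat radius a small constant multiple of $r$ and absorbing the resulting constant-factor loss into $\gamma$, invoking nerve-lemma/interleaving estimates to certify the cycle's persistence, and using the bi-Lipschitz hypothesis to reduce everything to the flat Euclidean model where such configurations can be written down explicitly.
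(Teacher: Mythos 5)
Your overall architecture matches the paper's: the upper bound via the Cohen-Steiner--Edelsbrunner--Harer--Mileyko count of long intervals summed over dyadic lifetime shells (Lemma~\ref{lemma_upper}), with the $\alpha=m$ case producing the $\log n$ from the $O(\log n)$ shells, and the lower bound via $\Theta(n)$ cells of diameter $\Theta(n^{-1/m})$ each containing a sphere-like configuration with positive probability. Two points of comparison are worth recording. First, for the upper bound your ``$O(n)$ pairs by a volume argument'' conflates two separate inputs: the Cohen-Steiner bound gives $O(\delta^{-m})$ intervals of length $>\delta$ \emph{independently of $n$}, and the total count $\abs{\PH_i}=O(n)$ needed to truncate the dyadic sum is obtained for the ball from the Alpha-complex inclusion $\abs{\PH_i}\leq\abs{DT}$ plus Dwyer's linear expected Delaunay complexity (this is also why Theorem~\ref{thm_ball} asserts the upper bound only in expectation --- linear $\PH_i$ \emph{variance} is not known for the ball in dimension $>2$, so Chebyshev cannot upgrade it to a statement in probability; your sketch does not notice this asymmetry). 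Second, and more substantively, the ``main obstacle'' you correctly identify --- certifying that a local $i$-cycle survives as a global persistence pair of length $\Theta(r)$ --- is resolved in the paper not by an empty moat between cells plus interleaving, but by introducing the relative rank $M_{X,\partial C}\paren{b,d}$ of $H_i\paren{X_b}\rightarrow H_i\paren{X_d\cup\paren{\partial C}_d}$ and proving exact super-additivity of these quantities over a cube decomposition via Mayer--Vietoris (and Alexander duality when $i=m-1$), see Lemma~\ref{lemma_SA2}; since $M_{X,\partial C}\leq N_X$, local cycles counted relative to their cube boundary automatically contribute to global intervals spanning $\paren{b,d}$, with no emptiness required \emph{between} adjacent cells (emptiness is only imposed inside each cell, to force Hausdorff proximity to the model sphere and hence birth/death control by bottleneck stability). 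Your moat-plus-interleaving route could likely be made to work, but it is precisely the step you left as a sketch, and the Mayer--Vietoris device is the cleaner way to close it; I would adopt it rather than trying to quantify ``points that cannot contribute a filling chain.''
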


We show a stronger result for compactly supported probability measures on $\R^2$ that are locally bounded:
\begin{Definition}
A probability measure $\mu$ on $\R^m$ is \textbf{locally bounded} if there is a $A\subset \R^m$ with positive volume and real numbers $a_1 \geq a_0 >0$ so that 
\[a_0\,\vol\paren{B} \leq \mu\paren{B}\leq a_1\,\vol\paren{B}\]
for all Borel sets $B\subset A.$
\end{Definition}

\begin{theorem}
\label{thm_R2}
Let $\mu$ is a compactly supported, locally bounded probability measure on $\R^2,$ and let $\set{x_n}_{n\in\N}$ be i.i.d. samples from $\mu.$ If $\alpha<m,$ 
\[ \gamma \leq \lim_{n\rightarrow\infty} n^{-\frac{m-\alpha}{m}}  E_\alpha^{1}\paren{x_1,\ldots,x_n} \leq \Gamma\]
in probability. In fact, the upper bound holds with probability one.

Furthermore, there exists a constant $D$ so that 
\[\lim_{n\rightarrow\infty}\frac{1}{\log\paren{n}}E_2^{1}\paren{x_1,\ldots,x_n} \leq D\]
with probability one
\end{theorem}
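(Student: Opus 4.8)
The plan is to prove the upper bounds --- including both "with probability one" statements --- by a purely deterministic counting estimate that exploits the planarity of the relevant complex, and to prove the lower bound in probability by a second-moment "witness" argument; only the latter uses the randomness.

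\textbf{Reduction of the upper bounds.} For a finite set $\vec x$ write $M_t=M_t(\vec x)=\#\set{(b,d)\in\PH_1(\vec x):d-b>t}$, so that the layer-cake formula gives
\[ E_\alpha^1(\vec x)=\int_0^\infty \alpha\,t^{\alpha-1}M_t\,dt. \]
Since $\mu$ has compact support of diameter $R$, the union of balls of radius $r\ge R$ about points of $\operatorname{supp}\mu$ is star-shaped, hence simply connected, so every bar of $\PH_1(\vec x)$ has length at most $R$ and $M_t\equiv 0$ for $t>R$. I would bound $M_t$ in two regimes. By the nerve lemma $\PH_1(\vec x)$ agrees with that of the alpha complex, a subcomplex of the Delaunay triangulation of $\vec x$; the latter has at most $2n$ triangles, and since the plane supports no $2$-cycles each added triangle closes exactly one distinct bar, so $M_t\le 2n$ for all $t$. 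On the other hand, the extremal theory of Cohen-Steiner et al.~\cite{2010cohensteiner}, in the form valid at the critical exponent (a bound, uniform in $n$, on the number of long bars in a bounded planar region), gives $M_t\le C\,t^{-2}$ for all $t\le R$, with $C=C(R)$ independent of $\vec x$. Splitting the integral at $t_0=n^{-1/2}$, using the first bound for $t<t_0$ and the second for $t>t_0$, yields for $\alpha<2$
\[ E_\alpha^1(\vec x)\le \int_0^{t_0}\!\alpha t^{\alpha-1}\,2n\,dt+\int_{t_0}^{R}\!\alpha t^{\alpha-1}\,Ct^{-2}\,dt\le 2n\,t_0^{\alpha}+C'\,t_0^{\alpha-2}=O\paren{n^{(2-\alpha)/2}}, \]
and for $\alpha=2$ the second integral is $2C\log(R/t_0)=C\log n+O(1)$ while the first is $O(1)$. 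Both estimates are deterministic, depending only on $\operatorname{supp}\mu$, so the upper bound $\Gamma$ and the constant $D$ hold with probability one; this also reproves the $m=2,\ i=1$ upper bounds of Theorems~\ref{thm_sphere} and~\ref{thm_ball}.

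\textbf{Lower bound in probability.} Let $A$ and $a_1\ge a_0>0$ be as in the hypothesis. Tile a fixed square inside $A$ by $N\asymp n$ congruent squares $Q_1,\dots,Q_N$ of side $\ell\asymp n^{-1/2}$, each with a concentric sub-square $Q_j'$ of side $\ell/2$. Let $G_j$ be the event that the points of $\set{x_1,\dots,x_n}$ lying in $Q_j$ form a "witness": a configuration inside $Q_j'$ encircling a point-free disk of radius $\asymp\ell$ whose surrounding annulus is $O(\ell)$-dense, arranged so that it contributes to $\PH_1(\set{x_1,\dots,x_n})$ a bar of length $\ge c\,\ell$. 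A Mayer--Vietoris/nerve argument --- equivalently the locality estimate of~\cite{2018schweinhart} --- shows that such a bar is genuinely present in the global diagram regardless of the other points, because the margin $Q_j\setminus Q_j'$ prevents any outside point from covering the disk before radius $\asymp\ell$, and witnesses in distinct squares contribute independent classes. Since $\#(\set{x_1,\dots,x_n}\cap Q_j)$ is Binomial with mean in $[a_0\ell^2 n,a_1\ell^2 n]=\Theta(1)$, we get $\Pr(G_j)\ge p$ for an absolute $p>0$; as the $G_j$ depend on disjoint subsets of the sample they are independent, so by a Chernoff bound $\#\set{j:G_j\text{ holds}}\ge \tfrac{p}{2}N$ with probability tending to one. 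On that event $E_\alpha^1(x_1,\dots,x_n)\ge \tfrac{p}{2}N(c\ell)^\alpha\asymp n\cdot n^{-\alpha/2}=n^{(2-\alpha)/2}$, which is the claimed lower bound in probability.

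\textbf{Main obstacle.} The delicate point is the locality claim underlying $G_j$: one must verify that a locally created $1$-cycle really does contribute a bar of persistence $\gtrsim\ell$ to the \emph{global} diagram --- that it is not already a boundary at its birth scale, not prematurely filled by distant points, and that the witnesses in different squares produce linearly independent persistent classes. I expect to obtain this from a nerve decomposition of each sublevel set $X_r$ into a small neighborhood of the local piece in $Q_j$ together with the complement, with explicit control of the connecting homomorphisms in the range $r\asymp\ell$; this is essentially the stability estimate already needed for the general main theorem and for~\cite{2018schweinhart}. A secondary technical point is confirming that $M_t\le C t^{-2}$ holds at the critical exponent with a constant independent of $n$ (rather than only $M_t\le C_\alpha t^{-\alpha}$ for $\alpha>2$ with $C_\alpha$ possibly blowing up as $\alpha\downarrow 2$); in the planar case this follows from a direct packing argument, since a bar of length $>t$ through scale $s$ forces a bounded complementary component of $X_s$ of diameter $\gtrsim t$, and such components are disjoint, so there are $O(R^2/t^2)$ of them.
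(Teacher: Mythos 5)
Your proposal follows essentially the same architecture as the paper's: the upper bounds come from combining the extremal bound of Cohen-Steiner et al.\ with the linear Delaunay count in the plane (your layer-cake integral over $M_t$ is just a continuous version of the paper's dyadic sum in Lemma~\ref{lemma_upper}, and the a.s.\ statements follow because the estimate is deterministic, exactly as in Proposition~\ref{corollary_linear} and Equation~\ref{eqn_delaunay}); the lower bound comes from tiling by $\Theta\paren{n}$ boxes of side $\Theta\paren{n^{-1/2}}$ with local annulus-around-empty-disk witnesses, which is the paper's occupancy-event scheme (Lemmas~\ref{lemma_weak_uniform}--\ref{lemma_lower_2}). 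Your secondary worry about the critical exponent is unfounded: Proposition~\ref{Proposition_CohenSteiner} is stated exactly at exponent $m$, so $M_t\leq C_0 t^{-2}$ with $C_0$ independent of $n$ is already available.

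Two points need repair. First, the events $G_j$ are \emph{not} independent for i.i.d.\ sampling at fixed $n$ (occupancy indicators of disjoint regions under a multinomial allocation are only negatively associated; for $n=1$ and two cells they are mutually exclusive). The Chernoff conclusion survives, but you must either invoke negative association, Poissonize, or prove the binomial stochastic domination directly --- this is precisely what the paper's Lemma~\ref{lemma_weak_uniform} supplies. Second, the locality claim you flag as the main obstacle is the real content of the lower bound and cannot be left as a sketch; the paper resolves it in two steps. Lemma~\ref{lemma_lower_1} uses bottleneck stability plus the buffer $Q_j\setminus Q_j'$ to show the witness forces $M_{\textbf{x},\partial Q_j}\paren{b,d}>0$, i.e.\ a class of the local configuration surviving even after adjoining the thickened cube boundary; Lemma~\ref{lemma_additivity} then shows these relative quantities are super-additive over the tiles and bounded above by $N_{\textbf{x}_n}\paren{b,d}$. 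In your case ($m=2$, $i=1=m-1$) the clean route is the paper's Alexander-duality branch of that lemma: each witness produces a bounded complementary component of $X_b$ still meeting the complement of $X_d$, these components lie in distinct open squares and are therefore distinct, and counting them lower-bounds the number of bars containing $\paren{b,d}$. Writing the argument in terms of the relative ranks $M_{X,\partial C}\paren{b,d}$ (rather than "a bar of length $\geq c\ell$ in the global diagram") is what makes the independence of the classes across squares rigorous. With those two substitutions your proof matches the paper's.
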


More generally, we prove results for locally bounded probability measures on spaces that are the bi-Lipschitz image of a compact, $m$-dimensional Euclidean simplicial complex:
\begin{Definition}
\label{defn_locally_bdd}
Let $M$ be the bi-Lipschitz image of a compact $m$-dimensional Euclidean simplicial complex $\Delta_M$ under a map $\phi_M.$ A probability measure $\mu$ on $M$ is \textbf{locally bounded} if there exists a subset $A\subset \Delta_M$ with positive $m$-dimensional volume, and real numbers $a_1\geq a_0 >0$ so that
 \[ a_0 \frac{\vol\paren{B}}{\vol\paren{\Delta_M}}\leq \mu\paren{\phi_M\paren{B}} \leq  a_1 \frac{\vol\paren{B}}{\vol\paren{\Delta_M}}\]
for all Borel sets $B\subseteq A.$
\end{Definition}
For example, a the uniform measure on a $m$-dimensional Riemannian manifold is locally bounded, as is any measure that is locally bounded with respect to the Riemannian volume.  

 While there exist metric spaces $M$ with point sets $\set{x_j}_{j\in\N}$ so that 
\[\abs{\PH_i\paren{x_1,\ldots,x_n}}\neq O\paren{n}\]
this is thought to be somewhat pathological behavior~\cite{2018schweinhart}.
\begin{Definition}
A probability measure $\mu$ on a triangulable metric space has \textbf{linear $\PH_i$ expectation} if 
\[\mathbb{E}\paren{\abs{\PH_i\paren{\set{x_1,\ldots,x_n}}}}\in O\paren{n}\]
Similarly, $\mu$ has \textbf{linear $\PH_i$ variance} if  
\[\mathbb{E}\paren{\paren{\abs{\PH_i\paren{\set{x_1,\ldots,x_n}}}-\mathbb{E}\paren{\abs{\PH_i\paren{\set{x_1,\ldots,x_n}}}}}^2}\in O\paren{n}\]

\end{Definition}
For example, the uniform measure on a Euclidean ball~\cite{1991dwyer} and any positive, continuous probability density on the Euclidean n-sphere~\cite{2014stemeseder} has linear $\PH_i$ expectation. It is more difficult to prove that a probability measure has linear $\PH_i$ variance. As far as we are aware, this is only known for probability measures on $\R^2$ and the uniform measure on the $n$-dimensional Euclidean sphere~\cite{2014stemeseder} (see Equation~\ref{eqn_delaunay} and Proposition~\ref{corollary_linear}).

\begin{theorem}
\label{thm_main}
Let $M$ be the bi-Lipschitz image of an $m$-dimensional Euclidean simplicial complex, and $0\leq i< m.$ If $\mu$ is a locally bounded probability measure on $M,$ there are real numbers $0<\gamma<\Gamma$ so that 
\[\gamma n^{\frac{m-\alpha}{m}} \leq \mathbb{E}\paren{E_\alpha^i\paren{x_1,\ldots,x_n}}\leq  \Gamma\,\mathbb{E}\paren{\abs{\PH_i\paren{\set{x_1,\ldots,x_n}}}}^{\frac{m-\alpha}{m}}\]
for all sufficiently large $n.$  In particular, if $\mu$ has linear $\PH_i$ expectation, there is a real number $\Gamma_0$ so that 
\[\gamma \leq \lim_{n\rightarrow\infty}  n^{-\frac{m-\alpha}{m}}\,\mathbb{E}\paren{E_\alpha^i\paren{x_1,\ldots,x_n}}\leq  \Gamma_0\]
The lower bound holds in probability, and the upper bound does if $\mu$ has linear $\PH_i$ variance.

Furthermore, there exists a real number $D$ so that 
\[\mathbb{E}\paren{E_n^i\paren{x_1,\ldots,x_n}}\leq  D\,\log\paren{\mathbb{E}\paren{\abs{\PH_i\paren{x_1,\ldots,x_n}}}}\]
where analogously sharper statements hold if $\mu$ has linear $\PH_i$ expectation or variance.
\end{theorem}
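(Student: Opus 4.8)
The plan is to prove the lower bound by a local construction, the upper bound by a single counting estimate that simultaneously covers $\alpha<m$, $\alpha=m$ and (vacuously) $\alpha>m$, and then to upgrade both expectation bounds to in-probability statements by elementary concentration once $\abs{\PH_i}$ is controlled.

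\textbf{Lower bound.} Local boundedness gives a subset of $A$ of positive $m$-volume inside the interior of a single top-dimensional simplex of $\Delta_M$; composing with the bi-Lipschitz map $\phi_M$, which distorts distances by at most a fixed factor, reduces matters to a fixed cube $Q_0\subset\R^m$ on which $\mu$ is comparable to Lebesgue measure. Fix once and for all a finite configuration $\sigma\subset\R^m$ whose \v{C}ech filtration carries an $i$-dimensional persistence interval of positive length --- such $\sigma$ exist because $i<m$, e.g.\ the $2\paren{i+1}$ vertices of a small cross-polytope, whose \v{C}ech complex passes through a space homotopy equivalent to $S^i$ before collapsing to a simplex --- and note that, by stability of persistence diagrams, the same holds for every configuration in a small Hausdorff neighborhood of a rescaled copy of $\sigma$. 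Tile $Q_0$ by $\asymp n$ congruent cubes $Q_j$ of side $2Cn^{-1/m}$ with $C$ a large constant; let $R_j\subset Q_j$ be the central subcube of side $n^{-1/m}$ and $B_j=Q_j\setminus R_j$ a buffer. Call $Q_j$ \emph{good} if exactly $\abs\sigma$ sample points land in $R_j$, forming a rescaled-and-perturbed copy of $\sigma$ at scale comparable to $n^{-1/m}$, and no sample point lands in $B_j$. Since $\mu\paren{R_j}\asymp\mu\paren{B_j}\asymp 1/n$, a short computation with the binomial distribution shows each $Q_j$ is good with probability at least a fixed $p_0=p_0\paren{C}>0$, and these events, depending on disjoint regions, are negatively associated. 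If $C$ exceeds a fixed multiple of the death time of $\sigma$'s interval, then on a good cell the \v{C}ech complex of the entire sample decomposes, at every scale up to that death time, as the disjoint union of its restrictions to $R_j$ and to $Q_0\setminus Q_j$, the two pieces being at mutual distance larger than that scale; a Mayer--Vietoris argument then shows that the $i$-cycle produced inside $R_j$ injects into the persistent homology of the full \v{C}ech complex with a lifetime at least $c_0 n^{-1/m}$. Writing $Z_n$ for the number of good cells, we obtain $E_\alpha^i\paren{x_1,\ldots,x_n}\ge c_0^\alpha\, n^{-\alpha/m}\, Z_n$ with $\mathbb{E}\, Z_n\asymp n$, which yields $\mathbb{E}\paren{E_\alpha^i}\ge\gamma\, n^{(m-\alpha)/m}$ and, since $\abs{\PH_i}\ge Z_n$, also $\mathbb{E}\abs{\PH_i}\gtrsim n$ (so the asserted $\gamma<\Gamma$ is consistent).

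\textbf{Upper bound.} The engine is a uniform estimate distilled from the work of Cohen-Steiner et al.: because $M$ is the bi-Lipschitz image of a compact $m$-dimensional Euclidean complex, there is a constant $c=c\paren{M,i}$ with
\[\#\set{(b,d)\in\PH_i\paren{X}\ :\ d-b>t}\ \le\ c\, t^{-m}\]
for every finite $X\subset M$ and every $t>0$. Given such an $X$ with $N:=\abs{\PH_i\paren{X}}$ intervals, all of length at most $\operatorname{diam}M$, I would write $E_\alpha^i\paren{X}=\alpha\int_0^{\operatorname{diam}M}t^{\alpha-1}\,\#\set{(b,d):d-b>t}\,dt$, bound the counting factor by $\min\paren{N,\,c\,t^{-m}}$, split the integral at the crossover $t_\ast=\paren{c/N}^{1/m}$, and evaluate both pieces. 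For $\alpha<m$ both are of order $N^{(m-\alpha)/m}$, yielding a pointwise inequality $E_\alpha^i\paren{X}\le\Gamma_1\,N^{(m-\alpha)/m}$ with $\Gamma_1$ depending only on $\alpha$, $m$ and $c$; for $\alpha=m$ the tail piece is logarithmic, giving $E_m^i\paren{X}\le c\log N + O\paren{1}$; and for $\alpha>m$ (not needed here) one recovers the total-persistence boundedness theorem of Cohen-Steiner et al. Taking expectations and applying Jensen's inequality --- the maps $t\mapsto t^{(m-\alpha)/m}$ and $t\mapsto\log t$ being concave --- turns these into $\mathbb{E}\paren{E_\alpha^i}\le\Gamma_1\,\mathbb{E}\abs{\PH_i}^{(m-\alpha)/m}$ and, for a suitable $D$, $\mathbb{E}\paren{E_m^i}\le D\,\log\mathbb{E}\abs{\PH_i}$ (legitimate since $\mathbb{E}\abs{\PH_i}\to\infty$), which are the upper bounds in the statement; if $\mu$ has linear $\PH_i$ expectation these become $O\paren{n^{(m-\alpha)/m}}$ and $O\paren{\log n}$.

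\textbf{In-probability statements and the main obstacle.} The lower bound holds in probability because $Z_n$ is a sum of $\asymp n$ negatively associated indicators of success probability at least $p_0$, so $\operatorname{Var}Z_n\le\mathbb{E}\,Z_n=O\paren{n}$ and Chebyshev gives $Z_n\ge\tfrac12\mathbb{E}\,Z_n$ with probability tending to $1$; the upper bound holds in probability when $\mu$ has linear $\PH_i$ variance, since then $\operatorname{Var}\abs{\PH_i}=O\paren{n}$ while $\mathbb{E}\abs{\PH_i}=\Theta\paren{n}$, so $\abs{\PH_i}/\mathbb{E}\abs{\PH_i}\to1$ in probability and the pointwise bound $E_\alpha^i\le\Gamma_1\abs{\PH_i}^{(m-\alpha)/m}$ (respectively its $\alpha=m$ analogue) transfers. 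The step I expect to be the crux is the displayed counting estimate at the \emph{critical} exponent $m$: the $L_p$-stability theorem of Cohen-Steiner et al.\ is proved for orders $p>m$, and the naive reductions --- Hölder from a fixed $p>m$, or letting $p\downarrow m$ --- overshoot the exponent $(m-\alpha)/m$ or pick up a spurious factor of $\log N$, so one must extract the uniform-in-$t$ bound on the number of long intervals directly from their construction and then verify that passing through $\phi_M$, and between the \v{C}ech filtration and the distance-to-sample sublevel filtration, costs only constants in $c$. A secondary point requiring care is the Mayer--Vietoris/locality argument in the lower bound: one must confirm that the cycle built in $R_j$ is neither born earlier nor killed earlier in the global filtration than in the isolated one, which is exactly what the buffer $B_j$ and the choice of $C$ guarantee.
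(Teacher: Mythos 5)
Your proposal is correct and follows the same overall architecture as the paper: the upper bound comes from the Cohen-Steiner--Edelsbrunner--Harer--Mileyko counting estimate $\abs{\set{\paren{b,d}\in\PH_i\paren{X}: d-b>t}}\leq c\,t^{-m}$ combined with Jensen's inequality and Chebyshev (the paper sums dyadically over interval lengths where you integrate the tail count --- interchangeable), and the lower bound comes from planting, in a grid of $\asymp n$ cells of side $\asymp n^{-1/m}$ inside a cube on which $\mu$ is comparable to Lebesgue measure, a local configuration carrying an $i$-cycle, each cell succeeding with probability bounded below. The genuine divergence is in how the planted cycles are aggregated into the global diagram. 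You isolate each good cell with an empty buffer annulus so that, at all scales up to the planted death time, the neighborhood of the full sample splits as a disjoint union and the interval count is literally additive. The paper does not isolate its cells (they tile the cube and may contain arbitrary additional points); instead it proves a super-additivity lemma for the ranks $M_{X,\partial C}\paren{b,d}$ of the maps $H_i\paren{X_b}\rightarrow H_i\paren{X_d\cup\paren{\partial C}_d}$, via Mayer--Vietoris (using $H_i\paren{\paren{\partial C}_d}=0$ for $i<m-1$) and Alexander duality for $i=m-1$, and it plants a discretized round $S^i$ via occupancy events plus bottleneck stability rather than an exact finite motif. Your route is more elementary; the paper's relative construction is more robust to points landing outside the prescribed empty regions. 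Two points in your write-up need tightening: the event ``exactly $\abs{\sigma}$ points in $R_j$, in a prescribed arrangement, and none in $B_j$'' is not a monotone function of cell occupancies, so negative association is not automatic --- either bound the pairwise covariances directly (a two-cell computation gives $O\paren{1/n}$ per pair, hence $\operatorname{Var}Z_n=O\paren{n}$) or use the binomial stochastic domination the paper invokes; and, as you anticipate, the uniform-in-$t$ bound on the number of long intervals must be taken in the form the paper cites as its Proposition 1 (proved by counting simplices in a mesh-$\delta$ triangulation of $M$), not deduced from the $L_p$ total-persistence statement for $p>m$.
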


We prove the upper bound in Proposition~\ref{prop_sharpUpper} and the lower bound in Proposition~\ref{prop_lower}.

After completion of this manuscript, we became aware that Divol and Polonik~\cite{2018divol} independently and concurrently proved a sharper result for the persistent homology of points sampled from  bounded, absolutely continuous probability densities on $\brac{0,1}^m.$ We believe this manuscript is still useful in that the proofs are largely self-contained, and the methods are applicable to other situations. In a later paper~\cite{2018schweinhart_c}, we use them to study the behavior of $E_\alpha^i\paren{x_1,\ldots,x_n}$ for i.i.d. points sampled from a measure supported on a set of fractional dimension.

\subsection{$\PH$-dimension}
\label{sec_ph_dim}

In~\cite{2018schweinhart}, we defined a family of persistent homology dimensions for a subset $X$ of a metric space $M$ in terms of the extremal behavior of $E_{\alpha}^i\paren{Y}$ for subsets $\textbf{x}$ of $X$:
\s{\text{dim}_{\PH}^i\paren{X}=\inf\set{\alpha : E_\alpha^i\paren{\textbf{x}} <C \;\forall \;\textbf{x}\subset X}}
That is, $E_\alpha^i\paren{\textbf{x}}$ is uniformly bounded for all $\alpha>\text{dim}_{\PH}^i\paren{X},$ but not for $\alpha<\text{dim}_{\PH}^i\paren{X}.$ Note that the persistent homology is taken with respect $M.$ Our results were the first rigorously relating persistent homology to a classically defined fractal dimension, the upper box dimension, but the definition is difficult to compute with in practice. Here, we define a similar notion of fractal dimension for measures on a metric space that may be more computable in practice:

\begin{Definition}
The $\PH_i$-dimension of a probability measure on a a triangulable metric space is 
\s{\text{dim}_{\PH}^i\paren{\mu}=\sup\set{\alpha : \limsup_{n\rightarrow\infty}\mathbb{E}\paren{E_\alpha^i\paren{x_1,\ldots,x_n}}=\infty}}
\end{Definition}

Clearly, $\text{dim}_{\PH}^i\paren{\mu}\leq \text{dim}_{\PH}^i\paren{\text{supp}\,\paren{\mu}}.$ As a corollary to our main theorem, we show:

\begin{theorem}
Let $M$ be the bi-Lipschitz image of a compact $m$-dimensional Euclidean simplicial complex, and $0\leq i< m.$ If $\mu$ is a locally bounded probability measure on $M,$ \[\text{dim}_{\PH}^i\paren{\mu}=m\]
\end{theorem}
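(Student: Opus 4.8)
The plan is to read the statement off directly from the two-sided estimate of Theorem~\ref{thm_main} together with the extremal (deterministic) boundedness result of Cohen--Steiner et al.~\cite{2010cohensteiner} quoted in the introduction; no new ideas are required, and in particular the hypotheses of linear $\PH_i$ expectation or variance play no role here. Write $S=\set{\alpha:\limsup_{n\rightarrow\infty}\mathbb{E}\paren{E_\alpha^i\paren{x_1,\ldots,x_n}}=\infty}$, so that $\text{dim}_{\PH}^i\paren{\mu}=\sup S$; I would establish $\sup S\geq m$ and $\sup S\leq m$ separately.

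For the inequality $\sup S\geq m$, fix any $\alpha<m$. Since $\mu$ is locally bounded on the bi-Lipschitz image of an $m$-dimensional Euclidean simplicial complex and $0\leq i<m$, the lower bound of Theorem~\ref{thm_main} supplies a $\gamma>0$ with $\mathbb{E}\paren{E_\alpha^i\paren{x_1,\ldots,x_n}}\geq \gamma\, n^{\paren{m-\alpha}/m}$ for all sufficiently large $n$; since $\paren{m-\alpha}/m>0$ this forces $\limsup_{n\rightarrow\infty}\mathbb{E}\paren{E_\alpha^i\paren{x_1,\ldots,x_n}}=\infty$, so $\alpha\in S$. Hence $S$ contains every $\alpha<m$ and $\sup S\geq m$. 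For the reverse inequality, fix any $\alpha>m$. Because $M$ is the bi-Lipschitz image of a \emph{compact} $m$-dimensional Euclidean simplicial complex, \cite{2010cohensteiner} gives a constant $C_\alpha$ with $E_\alpha^i\paren{\textbf{x}}\leq C_\alpha$ for every finite $\textbf{x}\subset M$; taking expectations, $\mathbb{E}\paren{E_\alpha^i\paren{x_1,\ldots,x_n}}\leq C_\alpha$ for all $n$, so $\limsup_{n\rightarrow\infty}\mathbb{E}\paren{E_\alpha^i\paren{x_1,\ldots,x_n}}<\infty$ and $\alpha\notin S$. Thus $S\cap\paren{m,\infty}=\emptyset$ and $\sup S\leq m$, which together with the previous inequality gives $\text{dim}_{\PH}^i\paren{\mu}=\sup S=m$.

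Since both ingredients are already in hand, there is no genuinely difficult step. The only point that requires a moment's care is that \emph{both} bounds in Theorem~\ref{thm_main} concern the regime $\alpha<m$, so the upper estimate $\sup S\leq m$ cannot be extracted from that theorem alone and must instead rely on the deterministic bound of~\cite{2010cohensteiner}; this is why the compactness assumption on $\Delta_M$ is used, and why the behaviour exactly at $\alpha=m$ --- the regime of the logarithmic bounds of Theorem~\ref{thm_main} --- is irrelevant to the value of the supremum.
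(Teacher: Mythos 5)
Your proposal is correct and is exactly the intended derivation: the paper states this result as an unproved corollary of Theorem~\ref{thm_main}, whose lower bound $\gamma n^{(m-\alpha)/m}\leq\mathbb{E}\paren{E_\alpha^i}$ handles $\alpha<m$, while the regime $\alpha>m$ is covered by the uniform extremal bound of Cohen--Steiner et al.\ (equivalently, by rerunning the dyadic argument of Lemma~\ref{lemma_upper} with Proposition~\ref{Proposition_CohenSteiner}, where the geometric series converges for $\alpha>m$). Your observation that the upper half of the claim cannot come from Theorem~\ref{thm_main} itself, and that the behaviour at $\alpha=m$ is irrelevant to the supremum, is the right point of care.
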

\subsection{Persistent Homology}
If $X$ is a bounded subset of a triangulable metric space $M$, let $X_{\epsilon}$ denote the $\epsilon$-neighborhood of $X:$
\[X_{\epsilon}=\set{x\in M: d\paren{x,X}<\epsilon}\]
Also, let $H_i\paren{X}$ be the reduced homology of $X$, with coefficients in a field $k$. The \textbf{persistent homology} of $X$ is the product $\prod_{\epsilon>0} H_i\paren{X_\epsilon},$ together with the inclusion maps $i_{\epsilon_0,\epsilon_1}:H_i\paren{X_{\epsilon_0}}\rightarrow H_i\paren{X_{\epsilon_1}}$ for $\epsilon_0<\epsilon_1.$ The structure of persistent homology is captured by a set of intervals, which we refer to as $\PH_i\paren{X}$~\cite{2005zomorodian}. These intervals represent how the topology of $X_{\epsilon}$ changes as $\epsilon$ increases. Under certain finiteness hypotheses --- which are satisfied if $X$ is a finite point set --- $\PH_i\paren{X}$ is the unique set of intervals so that the rank of $i_{\epsilon_0,\epsilon_1}$ equals the number of intervals containing $\paren{\epsilon_0,\epsilon_1}$~\cite{2007cohensteiner}.

If $X$ is finite $\PH_i\paren{X}$ is the same as the persistent homology of the \v{C}ech complex of $X.$ Note that this depends on the ambient metric space. Here, if ``$\mu$ is a probability measure on $M$ and $\set{x_j}_{j\in\N}$ are sampled from $\mu$,'' then $\PH_i\paren{x_1,\ldots,x_n}$ is the persistent homology with ambient metric space $M.$ All questions we study here would also be interesting in the context of the Vietoris---Rips Complex. 

\subsection{Notation}
In the following, an $m$-space will be the bi-Lipschitz image of a compact $m$-dimensional Euclidean simplicial complex. Also, if the measure $\mu$ is obvious from the context, $\set{x_j}_{j\in\N}$ will denote a collection of independent random variables with common distribution $\mu.$ Also, $\textbf{x}_n$ will be shorthand for $\set{x_1,\ldots,x_n}$ and $\textbf{x}$ will denote a finite point set.

\section{Upper Bounds}
\label{sec_upper_bound}
Our strategy to prove an upper bound for the asymptotics of $E_\alpha^i\paren{\set{x_1,\ldots,x_n}}$ will be to bound the number and length of the persistent homology intervals in terms of the number of simplices in a triangulation of the ambient metric space. The approach is similar to that in our earlier paper~\cite{2018schweinhart}.

\subsection{Preliminaries}

We require the following result, which is proven by bounding the number of persistent homology intervals of a triangulable metric space of length greater than $\delta$ in terms of the number of simplices in a triangulation of mesh $\delta$:

\begin{Proposition}(Cohen-Steiner, Edelsbrunner, Harer, and Mileyko~\cite{2010cohensteiner})
\label{Proposition_CohenSteiner}
Let $M$ be an $m$-space. There exists a real number $C_0$ so that for any $0\leq i < m,$ $X\subseteq M,$ and $\delta>0,$ 
\[\abs{\set{\paren{b,d}\in\PH_i\paren{X}:d-b>\delta}} \leq C_0\,\delta^{-m}\]
\end{Proposition}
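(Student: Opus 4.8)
The plan is to compare the offset filtration $\{X_\epsilon\}_{\epsilon>0}$ of the ambient space $M$ with a simplicial filtration carried by a sufficiently fine triangulation of $M$, to bound the comparison error by the mesh of that triangulation, and then to count persistence intervals combinatorially in terms of the number of simplices. (A naive pigeonhole over scales --- passing to a single scale $\epsilon\sim k\delta$ and bounding $\dim H_i(X_\epsilon)$ --- loses a spurious factor of $\operatorname{diam}(M)/\delta$ and only yields $O(\delta^{-m-1})$, so a genuinely multi-scale comparison is needed.) The quantitative input is that an $m$-space is economically triangulable: writing $M=\phi_M(\Delta_M)$ with $\phi_M$ bi-Lipschitz of constant $L$ and $\Delta_M$ a compact $m$-dimensional Euclidean complex, a grid-like subdivision of $\Delta_M$ (for instance the edgewise subdivision) of Euclidean mesh $\le r$ uses $\Theta(r^{-m})$ simplices, with constant depending only on $m$ and $\operatorname{vol}(\Delta_M)$; pushing it forward by $\phi_M$ yields, for every $r>0$, a triangulation of $M$ all of whose (curved) simplices have diameter $\le Lr$ and with $O(r^{-m})$ simplices in total. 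Fixing $\delta>0$, I choose $r$ so that the resulting triangulation $T$ has metric mesh $s\le\delta/2$; then $T$ has $O(\delta^{-m})$ simplices, with a constant depending only on $M$ and $m$ and uniform in $i$, $X$, and $\delta$.

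Next I build an interleaving. For a simplex $\sigma$ of $T$ put $g(\sigma)=\inf\{\epsilon>0:\sigma\subseteq X_\epsilon\}$; since faces of $\sigma$ have no larger value of $g$, the sublevel sets $T_\epsilon=\{\sigma\in T:g(\sigma)\le\epsilon\}$ form a filtration of $T$ by subcomplexes. Two inclusions hold for every $\epsilon$: $|T_\epsilon|\subseteq X_\epsilon$ by definition, and $X_\epsilon\subseteq|T_{\epsilon+s}|$, because each point of $X_\epsilon$ lies in a simplex of diameter $\le s$, every point of which is then within $\epsilon+s$ of $X$. Hence $X_\epsilon\subseteq|T_{\epsilon+s}|\subseteq X_{\epsilon+s}\subseteq|T_{\epsilon+2s}|$, which is precisely an $s$-interleaving between the persistence modules $\{H_i(X_\epsilon)\}_\epsilon$ and $\{H_i(|T_\epsilon|)\}_\epsilon$ (all maps are induced by inclusions of spaces, so the required triangles commute). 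In particular $\{H_i(X_\epsilon)\}_\epsilon$ is interleaved with a module having a finite persistence diagram, hence is itself q-tame, and by the stability theorem~\cite{2007cohensteiner} its persistence diagram lies within bottleneck distance $s\le\delta/2$ of that of $\{H_i(|T_\epsilon|)\}_\epsilon$.

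The count is then immediate. A bar $(b,d)$ of $\PH_i(X)$ with $d-b>\delta\ge 2s$ sits at $\ell^\infty$-distance $(d-b)/2>s$ from the diagonal, so in a bottleneck-optimal matching it must be paired with an off-diagonal point $(b',d')$ of the diagram of $\{H_i(|T_\epsilon|)\}_\epsilon$; since the matching is injective, the number of such bars is at most the number of off-diagonal points in that diagram, that is, the number of $i$-dimensional persistence intervals of the filtered complex $T$. Each such interval is created by a distinct $i$-simplex of $T$ (in reduced degree $0$, a distinct $0$-simplex, less one), so this number is at most the number of $i$-simplices of $T$, which is $O(\delta^{-m})$. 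Taking $C_0$ to be the implied constant gives the claim; for $\delta$ of order $\operatorname{diam}(M)$ or larger the filtration has already stabilized, so only finitely many long bars remain, and over that bounded range of $\delta$ they are absorbed by enlarging $C_0$.

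The main obstacle is the triangulation bound in the first paragraph. Iterated barycentric subdivision is useless here: it shrinks the mesh by only a factor $m/(m+1)$ per step while multiplying the simplex count by $(m+1)!$, so reaching mesh $r$ costs $r^{-c(m)}$ simplices with $c(m)\gg m$; one must instead use a grid-like subdivision and verify that it produces $\Theta(r^{-m})$ simplices of diameter $\Theta(r)$, and then check that pushing forward by the bi-Lipschitz map $\phi_M$ --- which distorts simplices into curved cells --- preserves both the diameter bound, up to the factor $L$, and the simplex count. A secondary technical point is that one must make sense of $\PH_i(X)$ and apply stability for an arbitrary, possibly infinite or wild, subset $X\subseteq M$; the interleaving above handles this, since being interleaved with a module having a finite diagram forces q-tameness. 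All of this is in essence the argument of Cohen-Steiner, Edelsbrunner, Harer, and Mileyko~\cite{2010cohensteiner}, to which I would refer for the details of the triangulation estimate.
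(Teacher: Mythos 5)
Your argument is correct and is exactly the approach the paper itself indicates: the paper does not prove this proposition but cites~\cite{2010cohensteiner} and describes the proof in one line as ``bounding the number of persistent homology intervals of length greater than $\delta$ in terms of the number of simplices in a triangulation of mesh $\delta$,'' which is precisely the triangulation--interleaving--stability count you carry out. The only loose ends (the open-neighborhood technicality in defining $g(\sigma)$, and the $O(r^{-m})$ grid-like subdivision estimate you defer to the reference) are genuinely the content of the cited paper and do not affect the soundness of your reconstruction.
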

We use this result to bound $E_\alpha^i\paren{\textbf{x}}$ in terms of the number of $\PH_i$ intervals of $\textbf{x}$:

\begin{Lemma}
\label{lemma_upper}
Let $M$ be an $m$-space, $\alpha<m,$ and $i\in\mathbb{N}.$ There exists a real number $C_1>0$ so that 
\[E_\alpha^i\paren{X}\leq C_1 \abs{\PH_i\paren{X}}^{\frac{m-\alpha}{m}}\]
 for all $X\subseteq M.$ Furthermore, there exists a real number $D_1>0$ so that 
\[E_m^i\paren{X}\leq D_1 \log\paren{\abs{\PH_i\paren{X}}}\]
 for all $X\subseteq M.$ 
\end{Lemma}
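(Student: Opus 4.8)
The plan is to bound $E_\alpha^i(X)$ and $E_m^i(X)$ by organizing the persistent homology intervals into dyadic length classes and applying Proposition~\ref{Proposition_CohenSteiner} to each class. Write $N = \abs{\PH_i(X)}$. First I would dispose of intervals of length $\geq 1$: since every interval of $\PH_i(X)$ has length at most $\mathrm{diam}(M)$ (persistence intervals die by the time $\epsilon$ exceeds the diameter, using reduced homology), Proposition~\ref{Proposition_CohenSteiner} with $\delta = 1$ shows there are at most $C_0$ of them, each contributing at most $\mathrm{diam}(M)^\alpha$, a bounded amount. So it suffices to bound the contribution of intervals of length $< 1$.

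Next, for $k \geq 0$ let $S_k = \{(b,d) \in \PH_i(X) : 2^{-(k+1)} < d-b \leq 2^{-k}\}$ be the $k$-th dyadic class. Proposition~\ref{Proposition_CohenSteiner} gives $\abs{S_0 \cup \cdots \cup S_k} \leq \abs{\{(b,d) : d-b > 2^{-(k+1)}\}} \leq C_0\,2^{(k+1)m}$; in particular $\abs{S_k} \leq C_0\,2^{(k+1)m}$. On the other hand trivially $\abs{S_k} \leq N$. Each $(b,d)\in S_k$ contributes at most $2^{-k\alpha}$ to $E_\alpha^i$, so
\[
\sum_{(b,d) : d-b < 1} (d-b)^\alpha \;\leq\; \sum_{k\geq 0} \abs{S_k}\,2^{-k\alpha} \;\leq\; \sum_{k\geq 0} \min\paren{N,\, C_0\,2^{(k+1)m}}\,2^{-k\alpha}.
\]
Let $k^* = \lceil \frac{1}{m}\log_2(N/C_0)\rceil$ be the threshold where the two bounds in the minimum cross. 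For $k \leq k^*$ use $\abs{S_k}\leq N$ and sum the geometric series $\sum_{k\leq k^*} N\,2^{-k\alpha}$; this is a constant times $N\cdot 2^{-k^*\alpha} \asymp N \cdot N^{-\alpha/m} = N^{(m-\alpha)/m}$ when $\alpha > 0$, and is $\asymp N \asymp N^{(m-\alpha)/m}$ when $\alpha \leq 0$ (here one checks $\alpha=0$ separately, noting $N^{(m-0)/m}=N$). For $k > k^*$ use $\abs{S_k}\leq C_0\,2^{(k+1)m}$ and sum $\sum_{k > k^*} C_0\,2^{(k+1)m}\,2^{-k\alpha} = C_0\,2^m\sum_{k>k^*} 2^{k(m-\alpha)}$; since $\alpha < m$ this geometric series converges and is dominated by its first term, again $\asymp 2^{k^*(m-\alpha)} \asymp N^{(m-\alpha)/m}$. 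Adding the two pieces and the bounded contribution of the long intervals yields $E_\alpha^i(X) \leq C_1\,N^{(m-\alpha)/m}$ for a suitable constant $C_1$ depending only on $M$, $\alpha$, and $i$.

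For the $\alpha = m$ case the computation is the same except that the geometric ratios degenerate to $1$. The tail sum $\sum_{k>k^*} C_0\,2^{(k+1)m}2^{-km}$ becomes $\sum_{k>k^*} C_0\,2^m$, which no longer converges, but it does not need to: every summand with $d-b < 2^{-k}$ for $k$ so large that $S_k = \emptyset$ vanishes, and $S_k$ can be nonempty only while $\abs{S_k}\geq 1$, forcing $2^{(k+1)m}\gtrsim 1$ — more usefully, one truncates at the largest $k$ with $S_k\neq\emptyset$, and since there are only $N$ intervals total, no interval has length below any fixed positive number that would require more than... — cleaner: use $\abs{S_k}\leq N$ for all $k$ and $\abs{S_k}\leq C_0 2^{(k+1)m}$, so $\sum_k \abs{S_k} 2^{-km} \leq \sum_{k\leq k^*} N\,2^{-km} + \sum_{k>k^*} C_0\,2^m$. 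The first sum is bounded by a constant (geometric, $\alpha=m>0$). For the second, note $S_k\neq\emptyset$ forces $k$ at most $k^{**}$ for some $k^{**}$, and the crude bound $\abs{\{k : S_k \neq \emptyset\}} \leq k^* + (\text{number of nonempty }S_k\text{ with }k>k^*)$; since $\sum_{k>k^*}\abs{S_k}\leq N$ and each nonempty $S_k$ contributes $\geq 1$, there are at most $N$ such $k$, but that is too weak. Instead I would bound directly: $\sum_{k > k^*}\abs{S_k}2^{-km} \leq 2^{-k^* m}\sum_{k>k^*}\abs{S_k} \leq 2^{-k^*m}\cdot N \asymp C_0$, a constant. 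Hence $E_m^i(X) \leq (\text{const}) + (\text{const})\cdot k^* \leq D_1\log\abs{\PH_i(X)}$, recalling $k^* \asymp \log_2 N$.

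The main obstacle is purely bookkeeping: choosing the dyadic crossover index $k^*$ correctly and verifying that \emph{both} geometric tails (the one using the global count $N$ for short-to-moderate intervals and the one using the Cohen--Steiner polynomial bound for longer intervals) are each controlled by the single term $N^{(m-\alpha)/m}$, uniformly in $N$, with the degenerate behavior at $\alpha = m$ and at $\alpha = 0$ handled as limiting cases. There is no conceptual difficulty — Proposition~\ref{Proposition_CohenSteiner} does all the topological work — but one must be careful that $C_1$ and $D_1$ depend only on $M$ (through $C_0$ and $\mathrm{diam}(M)$), $\alpha$, and $i$, and not on $X$ or $N$, which is what makes the estimate useful in the subsequent sections.
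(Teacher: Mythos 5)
Your overall strategy --- dyadic binning of the intervals by length, the two-sided bound $\abs{S_k}\leq\min\paren{N,\,C_0 2^{(k+1)m}}$ from Proposition~\ref{Proposition_CohenSteiner}, and a split at the crossover index $k^*$ --- is exactly the paper's argument (the paper phrases it as a rearrangement: the sum is maximized by packing $C_0 2^{mk}$ intervals of length $2^{-k}$ for $k=0,\dots,l$ with $l\approx\frac{1}{m}\log_2(2N/C_0)$). However, you have attached the two bounds to the wrong sides of $k^*$, and the two geometric-series evaluations that would expose this are both stated backwards. For $k\leq k^*$ you use $\abs{S_k}\leq N$ and claim $\sum_{k\leq k^*}N2^{-k\alpha}\asymp N2^{-k^*\alpha}$; but for $\alpha>0$ this decreasing series is dominated by its \emph{first} term, so it is $\asymp N$, which does not give $N^{(m-\alpha)/m}$. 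For $k>k^*$ you use $\abs{S_k}\leq C_0 2^{(k+1)m}$ and claim that $\sum_{k>k^*}2^{k(m-\alpha)}$ converges "since $\alpha<m$"; in fact it converges only when $\alpha>m$ and diverges in the regime of the lemma, so that piece of your estimate is infinite. The same slip recurs in the $\alpha=m$ case: $\sum_{k\leq k^*}N2^{-km}$ is $\asymp N$, not a constant, and your concluding line $E_m^i(X)\leq\text{const}+\text{const}\cdot k^*$ does not follow from the two pieces you actually bounded (each of which you claimed is $O(1)$).

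The repair is simply to swap the assignment, which recovers the paper's proof. For $k\leq k^*$ use the Cohen--Steiner bound: $\sum_{k\leq k^*}C_0 2^{(k+1)m}2^{-k\alpha}=C_0 2^m\sum_{k\leq k^*}2^{k(m-\alpha)}$ is an \emph{increasing} geometric series dominated by its last term, $\asymp 2^{k^*(m-\alpha)}\asymp N^{(m-\alpha)/m}$ when $\alpha<m$, and is $C_0 2^m(k^*+1)=O(\log N)$ when $\alpha=m$ --- this is where the logarithm comes from. For $k>k^*$ use $\abs{S_k}\leq N$: when $\alpha>0$ the tail $N\sum_{k>k^*}2^{-k\alpha}\asymp N2^{-k^*\alpha}\asymp N^{(m-\alpha)/m}$, and your own direct estimate $\sum_{k>k^*}\abs{S_k}2^{-km}\leq 2^{-k^*m}N\asymp C_0$ already handles $\alpha=m$ correctly. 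Your treatment of the intervals of length $\geq 1$ (at most $C_0$ of them, each contributing a bounded amount) is fine and plays the same role as the paper's rescaling of $M$ to have diameter less than one.
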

\begin{proof}

Dilating $M$ by a factor $r$ multiplies $E_\alpha^i\paren{X}$ by $r^{\alpha},$ so we may assume without loss of generality that the diameter of $M$ is less than one. Let $n=\abs{\PH_i\paren{X}}$ and 
\[I_k=\set{\paren{b,d}\in\PH_i\paren{X}:\frac{1}{2^{k+1}} < d-b \leq \frac{1}{2^k}}\]
Also, let $C_0$ be as in Proposition~\ref{Proposition_CohenSteiner} so
\[\abs{I_k} \leq C_0 2^{m k}\]

The largest $C_0$ intervals of $\PH_i\paren{X}$ each have length less than or equal to $2^{0},$ the next largest $C_0 2^m$ intervals have length less than or equal to $2^{-1},$ and so on. It follows that if
\[l=\ceil[\Big]{\frac{\log_2\paren{2n/C_0}}{m}}\]
then
\[n\leq \sum_{k=0}^{l} C_0 2^{m k}\]
and
\[E_\alpha^i\paren{X} \leq \sum_{k=0}^{l} C_0 2^{m k} \paren{\frac{1}{2^k}}^\alpha\]
If $\alpha=m,$ the previous inequality becomes 
\[E_\alpha^i\paren{X} \leq C_0 l = O\paren{\log\paren{n}}\]
as desired.

Otherwise, if $\alpha<m,$
\begin{align*}
E_\alpha^i\paren{X} \leq & \\
&\;\; \sum_{k=0}^{l} C_0 2^{k\paren{m-\alpha}} \\
=&\;\; C_0 \frac{2^{\paren{m-\alpha}\paren{l+1}}-1}{2^{m-\alpha}-1}\\
\leq  &\;\; \frac{C_0}{2^{m-\alpha}-1} 2^{\paren{m-\alpha}\paren{l+1}}\\
\leq & \;\; \frac{C_0}{2^{m-\alpha}-1} 2^{\paren{\frac{\log_2\paren{2n/C_0}}{m}+2}\paren{m-\alpha}}\\
= & \;\; C_1 n^{\frac{m-\alpha}{m}}
\end{align*}
where $C_1=\frac{C_0 4^{m-\alpha}}{2^{m-\alpha}-1}.$
\end{proof}

\subsection{The Upper Bound}

The upper bound in our main theorem now follows immediately from Jensen's inequality, as the function $f\paren{x}=x^\frac{m-\alpha}{m}$ is concave for $0<\alpha\leq m:$
\begin{Proposition}
\label{prop_sharpUpper}
Let $M$ be an $m$-space, let $i$ be a natural number less than $m,$ and let $\mu$ be a locally bounded probability measure on $M.$ For all $0<\alpha<m$ there exists a real number $C>0$ so that 
\[\mathbb{E}\paren{E_\alpha^i\paren{x_1,\ldots,x_n}}\leq  C\,\mathbb{E}\paren{\abs{\PH_i\paren{x_1,\ldots,x_n}}}^{\frac{m-\alpha}{m}}\]
In particular, if $\mu$ has linear $\PH_i$ expectation and linear $\PH_i$ variance then there is a $C'>0$ so that
\[\lim_{n\rightarrow\infty} n^{-\frac{m-\alpha}{m}}E_\alpha^i\paren{x_1,\ldots,x_n}\leq  C'\]
in probability.

Furthermore, there exists a real number $D$ so that 
\[\mathbb{E}\paren{E_m^i\paren{x_1,\ldots,x_n}}\leq D\,\log\paren{\abs{\PH_i\paren{x_1,\ldots,x_n}}}\]
In particular, if $\mu$ has linear $\PH_i$ expectation and linear $\PH_i$ variance then there is a $D'>0$ so that
\[\lim_{n\rightarrow\infty} \frac{1}{\log\paren{n}} E_m^i\paren{x_1,\ldots,x_n}\leq  D'\]
in probability.

\end{Proposition}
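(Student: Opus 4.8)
The plan is to combine the deterministic bounds of Lemma~\ref{lemma_upper} with Jensen's inequality for the statements about expectations, and then to upgrade to convergence in probability by a second-moment argument applied to $\abs{\PH_i}$ alone.

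For $0<\alpha<m$, Lemma~\ref{lemma_upper} gives the pointwise bound $E_\alpha^i\paren{\textbf{x}_n}\leq C_1\,\abs{\PH_i\paren{\textbf{x}_n}}^{\frac{m-\alpha}{m}}$ valid for every realization. Taking expectations and applying Jensen's inequality to the concave function $f\paren{x}=x^{\frac{m-\alpha}{m}}$ (concave since $\frac{m-\alpha}{m}\in\paren{0,1}$) yields $\mathbb{E}\paren{E_\alpha^i\paren{\textbf{x}_n}}\leq C_1\,\mathbb{E}\paren{f\paren{\abs{\PH_i\paren{\textbf{x}_n}}}}\leq C_1\,f\paren{\mathbb{E}\paren{\abs{\PH_i\paren{\textbf{x}_n}}}}$, so $C=C_1$ works. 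The case $\alpha=m$ is identical, using instead the bound $E_m^i\paren{\textbf{x}_n}\leq D_1\log\abs{\PH_i\paren{\textbf{x}_n}}$ together with the concavity of $\log$; one should read $\log\abs{\PH_i}$ as $\log\max\paren{1,\abs{\PH_i}}$ to handle the finitely many $n$ for which $\abs{\PH_i\paren{\textbf{x}_n}}$ may vanish, which changes nothing asymptotically and is absorbed into the constant $D$.

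For the in-probability statements the key observation is that we need not concentrate $E_\alpha^i$ itself: the deterministic inequality of Lemma~\ref{lemma_upper} reduces everything to controlling $\abs{\PH_i\paren{\textbf{x}_n}}$ from above. If $\mu$ has linear $\PH_i$ expectation and linear $\PH_i$ variance, then $\mathbb{E}\paren{\abs{\PH_i\paren{\textbf{x}_n}}}\leq cn$ and $\mathrm{Var}\paren{\abs{\PH_i\paren{\textbf{x}_n}}}\leq c'n$ for all large $n$, so Chebyshev's inequality gives $\mathbb{P}\paren{\abs{\PH_i\paren{\textbf{x}_n}}>2cn}\leq \mathbb{P}\paren{\abs{\abs{\PH_i\paren{\textbf{x}_n}}-\mathbb{E}\paren{\abs{\PH_i\paren{\textbf{x}_n}}}}>cn}\leq c'/\paren{c^2 n}\to 0$. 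On the complement of that event, $n^{-\frac{m-\alpha}{m}}E_\alpha^i\paren{\textbf{x}_n}\leq C_1\paren{2c}^{\frac{m-\alpha}{m}}=:C'$, which is the claimed convergence in probability; the $\alpha=m$ case is the same, since on $\set{\abs{\PH_i\paren{\textbf{x}_n}}\leq 2cn}$ one has $\paren{\log n}^{-1}E_m^i\paren{\textbf{x}_n}\leq D_1\paren{\log n}^{-1}\log\paren{2cn}\to D_1$, so any $D'>D_1$ suffices. I do not expect a genuine obstacle here, since the substantive work has been front-loaded into Proposition~\ref{Proposition_CohenSteiner} and Lemma~\ref{lemma_upper}; the only point deserving care is to note that the bound on the expectation uses only linear $\PH_i$ expectation, whereas upgrading to convergence in probability genuinely requires the variance hypothesis, and that the argument sidesteps any need to concentrate $E_\alpha^i$ directly.
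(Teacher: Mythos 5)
Your proof is correct and follows essentially the same route as the paper: the deterministic bound of Lemma~\ref{lemma_upper} combined with Jensen's inequality for the expectation statements, and Chebyshev's inequality applied to $\abs{\PH_i\paren{\textbf{x}_n}}$ to obtain the in-probability statements. Your added remarks (the explicit Chebyshev computation and the caveat about $\log$ of a possibly vanishing $\abs{\PH_i}$) only make the argument more careful than the paper's version.
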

\begin{proof}
Let $\textbf{x}$ be a finite subset of $B,$ and let $C_1$ be as in Lemma~\ref{lemma_upper}. If $\alpha<m,$
\begin{align*}
\mathbb{E}\paren{E_\alpha^i\paren{\textbf{x}}}\leq &\\
&\;\; \mathbb{E} \paren{C_1 \abs\PH_i\paren{\textbf{x}}^{\frac{m-\alpha}{m}}} &&\text{by Lemma~\ref{lemma_upper}}\\
\leq & \;\; C_1 \mathbb{E}\paren{\abs\PH_i\paren{\textbf{x}}}^{\frac{m-\alpha}{m}} &&\text{by Jensen's inequality}
\end{align*}
as desired. If $\mu$ has linear $\PH_i$ expectation and linear $\PH_i$-variance, Chebyshev's Inequality implies that
\[\lim_{n\rightarrow\infty} \abs{\PH_i\paren{x_1,\ldots,x_n}}/n \leq C_2 \]
in probability, for some $C_2>0,$ and the desired statement follows from Lemma~\ref{lemma_upper}.

The proof for the case $\alpha=m$ is similar.
\end{proof}

\subsection{Sharper Upper Bounds}

Our sharper upper bounds in Theorems~\ref{thm_sphere} and~\ref{thm_ball} follow from the fact that if $\set{x_1,\ldots,x_n}$ is a finite subset of $\R^m$ of $S^m$ in general position then
\begin{equation}
\label{eqn_delaunay}
\abs{\PH_i\paren{x_1,\ldots,x_n}}\leq \abs{DT\paren{x_1,\ldots,x_n}}
\end{equation}
where $DT\paren{x_1,\ldots,x_n}$ is the number of simplices of the Delaunay triangulation on $\set{x_1,\ldots,x_n}.$ In fact, the Alpha complex is a filtration on the simplices of the Delaunay triangulation that is homotopy equivalent to the $\epsilon$-neighborhood filtration of the points $\set{x_1,\ldots,x_n}$ ~\cite{2002edelsbrunner}. This construction is usually defined for points in Euclidean space, but easily extends to points on the $m$-sphere, in which case the Delaunay triangulation is the spherical convex hull of the points.

\begin{Proposition}
\label{corollary_linear}
If $B$ be a bounded subset of $\R^m$
\[E_\alpha^i\paren{x_1,\ldots,x_n} = O\paren{n^{\floor{\frac{m+1}{2}}\frac{m-\alpha}{m}}}\]
for any general position point set $\set{x_1,\ldots,x_n}$ contained in $B.$
\end{Proposition}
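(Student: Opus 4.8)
The plan is to combine Lemma~\ref{lemma_upper} with a classical bound on the size of the Delaunay triangulation in $\R^m$. By the Upper Bound Theorem for convex polytopes (McMullen), the number of faces of all dimensions of the Delaunay triangulation of $n$ points in general position in $\R^m$ is $O\paren{n^{\floor{(m+1)/2}}}$: the Delaunay triangulation is the projection of the lower faces of a convex polytope in $\R^{m+1}$ with $n$ vertices, and the number of faces of such a polytope is maximized by the cyclic polytope, giving the stated exponent. Hence $\abs{DT\paren{x_1,\ldots,x_n}} = O\paren{n^{\floor{(m+1)/2}}}$, and by Equation~\ref{eqn_delaunay}, $\abs{\PH_i\paren{x_1,\ldots,x_n}} = O\paren{n^{\floor{(m+1)/2}}}$ for any general-position point set contained in a bounded $B \subseteq \R^m$.

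First I would invoke Lemma~\ref{lemma_upper}, which gives $E_\alpha^i\paren{\textbf{x}} \leq C_1 \abs{\PH_i\paren{\textbf{x}}}^{\frac{m-\alpha}{m}}$ for $\alpha < m$. Substituting the Delaunay bound $\abs{\PH_i\paren{\textbf{x}}} = O\paren{n^{\floor{(m+1)/2}}}$ yields
\[
E_\alpha^i\paren{x_1,\ldots,x_n} \leq C_1 \cdot O\paren{n^{\floor{(m+1)/2}}}^{\frac{m-\alpha}{m}} = O\paren{n^{\floor{(m+1)/2}\frac{m-\alpha}{m}}},
\]
which is exactly the claimed statement. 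One should note that a bounded subset $B$ of $\R^m$ embeds isometrically in a compact $m$-space (e.g.\ a large cube triangulated into simplices), so the hypotheses of Lemma~\ref{lemma_upper} are met; alternatively one can apply the lemma directly since $\R^m$ restricted to a bounded region is bi-Lipschitz to a Euclidean simplicial complex.

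The main obstacle — really the only nontrivial ingredient — is establishing that the Alpha-complex/Delaunay bound $\abs{\PH_i\paren{\textbf{x}}} \leq \abs{DT\paren{\textbf{x}}}$ of Equation~\ref{eqn_delaunay} is valid for an \emph{arbitrary} bounded $B \subseteq \R^m$ rather than just for the ball or sphere treated in Theorems~\ref{thm_ball} and~\ref{thm_sphere}; but this is immediate because the Alpha complex construction of~\cite{2002edelsbrunner} depends only on the points themselves (and the ambient Euclidean metric), not on any containing region, and the homotopy equivalence of the Alpha filtration with the $\epsilon$-neighborhood filtration is a purely local statement. The remaining arithmetic — propagating the exponent through the concave power $\frac{m-\alpha}{m}$ — is routine. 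If one also wants the $\alpha = m$ endpoint, the second half of Lemma~\ref{lemma_upper} gives $E_m^i\paren{\textbf{x}} = O\paren{\log n}$, which is consistent with (and stronger than) the $n^{0}$ one would read off the displayed exponent at $\alpha = m$, though the proposition as stated only concerns $\alpha < m$.
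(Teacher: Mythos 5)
Your proof is correct and follows essentially the same route as the paper: invoke the Upper Bound Theorem to get $\abs{DT\paren{x_1,\ldots,x_n}}=O\paren{n^{\floor{(m+1)/2}}}$, combine with Equation~\ref{eqn_delaunay} to bound $\abs{\PH_i}$, and then apply Lemma~\ref{lemma_upper}. The additional remarks about the Alpha complex being independent of the containing region are sensible but not needed beyond what the paper already asserts.
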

\begin{proof}
The Upper Bound Theorem~\cite{stanley1975} implies that if $X\subset \R^m$ then
\[\abs{\paren{DT}\paren{x_1,\ldots,x_n}}=O\paren{n^{\floor{\frac{m+1}{2}}}}\]

The desired statement follows immediately from Lemma~\ref{lemma_upper} and Equation~\ref{eqn_delaunay}
\end{proof}

\section{Lower Bounds}
Our strategy to prove lower bounds for the asymptotics of weighted $\PH$-sums is to study collections of sets whose persistent homology obeys a super-additivity property. We define certain ``cubical occupancy events'' giving rise to such collections, and prove that they occur with positive probability for sets of i.i.d. points drawn from a locally bounded probability measure on an $m$-space. We bootstrap these results by subdividing a subset of an $m$-dimensional cube into many small sub-cubes. This bootstrapping argument is similar to the one we used to prove a lower bound for $\PH_i$ dimension in our previous paper~\cite{2018schweinhart}.

In the following, fix $0\leq i <m.$

\subsection{Super-additivity for Persistent Homology}

Persistent homology does not in general obey a super-additivity property, but we can define a subclass of sets whose persistent homology does. If $X$ and $T$ are subsets of a triangulable metric space and $b<d,$ let $M_{X,T}\paren{b,d}$ be the rank of the homomorphism on homology induced by the inclusion
\[X_b\hookrightarrow X_d \hookrightarrow X_d \cup T_d\]
where $X_{\epsilon}$ denotes the $\epsilon$-neighborhood of $X.$ Note that 
\[M_{X,C}\paren{b,d}\leq N_X\paren{b,d}\]
where $N_X\paren{b,d}$ is the number of intervals of $\PH_i\paren{X}$ with birth times less than $b$ and death times greater than $d.$ We will show that if $C$ is an $m$-dimensional cube and $X\subset C,$ then quantities of the form $M_{X,\partial C}\paren{d,b}$ obey a super-additivity property.

\begin{Lemma}
\label{lemma_additivity}
\label{lemma_SA2}
Let $\set{C_1,\ldots, C_n}$ be $m$-dimensional cubes in $\R^m$ so that
\[ C_j\cap C_k\subset \partial C_j \qquad \forall\,j,k\in\set{1,\ldots,n}:j\neq k\]
If $X_j \subset C_j$ for $j=1,\ldots,n$
\[N_{\cup_j X_j}\paren{b,d} \geq M_{\cup_j X_j, \cup_j \partial C_j }\paren{b,d} \geq   \sum_{j=1}^{n} M_{X_j,\partial C_j}\paren{b,d}\]
for any $0\leq b <d.$ 
\end{Lemma}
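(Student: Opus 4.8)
The plan is to prove the two inequalities separately, working from right to left. The left inequality $N_{\cup_j X_j}(b,d) \geq M_{\cup_j X_j, \cup_j \partial C_j}(b,d)$ is essentially the observation already recorded before the statement, namely that $M_{X,T}(b,d) \leq N_X(b,d)$: the rank of the composite $X_b \hookrightarrow X_d \hookrightarrow X_d \cup T_d$ is bounded by the rank of $X_b \hookrightarrow X_d$, which counts exactly the $\PH_i$ intervals of $X$ born before $b$ and dying after $d$. So the real content is the right inequality, the super-additivity
\[
M_{\cup_j X_j,\, \cup_j \partial C_j}(b,d) \;\geq\; \sum_{j=1}^n M_{X_j,\partial C_j}(b,d).
\]

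To set this up, write $U = \bigcup_j X_j$ and $T = \bigcup_j \partial C_j$. First I would observe that, because the cubes meet only along their boundaries, the $\epsilon$-neighborhoods decompose compatibly: $U_\epsilon \subseteq \bigcup_j (C_j)_\epsilon$, and more usefully, intersecting with a slightly shrunken copy of each closed cube, $(X_j)_\epsilon \cap C_j$ together with the ``collar'' region $T_\epsilon$ covers $U_\epsilon$. The key point is that for $\epsilon = d$ we can form a Mayer--Vietoris type decomposition of $U_d \cup T_d$ into the pieces $(X_j)_d \cup T_d$, which pairwise intersect inside $T_d$ (a neighborhood of the union of the boundary $(m-1)$-skeleta). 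The plan is then to build a map of "persistence pairs": for each $j$, the inclusion $C_j \hookrightarrow \R^m$ induces a commuting square
\[
\begin{CD}
(X_j)_b @>>> (X_j)_d \cup (\partial C_j)_d\\
@VVV @VVV\\
U_b @>>> U_d \cup T_d
\end{CD}
\]
and hence a map on the images of the horizontal maps in homology. The claim is that these images, one for each $j$, sit inside $H_i(U_d \cup T_d)$ as an internal direct sum, which gives the inequality on ranks.

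The main obstacle — and the step I would spend the most care on — is proving that independence/direct-sum property: that a class in $H_i(U_d\cup T_d)$ supported (in the image of) the $j$-th piece $(X_j)_d\cup(\partial C_j)_d$ cannot be cancelled by contributions from the other pieces. The natural tool is a relative/Mayer--Vietoris argument: collapsing $T_d$, the space $(U_d\cup T_d)/T_d$ is a wedge-like object $\bigvee_j \big((X_j)_d\cup(\partial C_j)_d\big)/(\partial C_j)_d$ over the pieces, so $\widetilde H_i$ of the quotient splits as a direct sum over $j$; one then checks that the image of $H_i((X_j)_b)$ in $H_i(U_d\cup T_d)$ maps injectively (on the relevant subquotient) to the $j$-th summand of the quotient, because $(X_j)_b$ already lies in $(X_j)_d\cup(\partial C_j)_d$ and its image there, pushed to the quotient by $(\partial C_j)_d$, has the same rank as $M_{X_j,\partial C_j}(b,d)$ — here one uses that $(\partial C_j)_d$ is, up to homotopy, a neighborhood of the $(m-1)$-sphere $\partial C_j$ and that collapsing it does not lose any of the $i$-dimensional classes counted by $M$ since $i<m$. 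Assembling: the total rank of $\bigoplus_j \mathrm{im}\big(H_i((X_j)_b)\to H_i(U_d\cup T_d)\big)$ is at least $\sum_j M_{X_j,\partial C_j}(b,d)$, and since this subspace is contained in the image of $H_i(U_b)\to H_i(U_d\cup T_d)$, whose rank is $M_{U,T}(b,d)$ by definition, the super-additivity follows. Care with reduced versus unreduced homology (the $\PH_i$ here is reduced) and with the case $b=0$ is routine but should be spelled out; the bi-Lipschitz/triangulability hypotheses are only needed to ensure the neighborhoods are nice enough (e.g.\ homotopy equivalent to the relevant complexes) for these excision and collapsing steps to be valid.
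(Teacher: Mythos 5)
Your strategy for the right-hand inequality --- push the image of $H_i\paren{\paren{X_j}_b}$ into $H_i\paren{U_d\cup T_d}$ and then separate the contributions of different $j$ by collapsing $T_d$ to get a wedge over $j$ --- is essentially a repackaging of the Mayer--Vietoris induction the paper uses, and it is valid exactly in the range where the paper's diagram argument is: the long exact sequence of the pair shows that the kernel of $H_i\paren{\paren{X_j}_d\cup\paren{\partial C_j}_d}\rightarrow H_i\paren{\paren{\paren{X_j}_d\cup\paren{\partial C_j}_d}/\paren{\partial C_j}_d}$ is the image of $H_i\paren{\paren{\partial C_j}_d}$, so no class counted by $M_{X_j,\partial C_j}\paren{b,d}$ is lost precisely when $H_i\paren{\paren{\partial C_j}_d}=0$.

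The gap is your assertion that this holds "since $i<m$." For the relevant (small) values of $d$, the set $\paren{\partial C_j}_d$ is homotopy equivalent to $S^{m-1}$, so its reduced $H_i$ vanishes only for $i\neq m-1$; at $i=m-1$ it is nonzero and the collapse really can kill the classes you need. Concretely, let $X_j$ be a dense sample of an $\paren{m-1}$-sphere concentric with and close to $\partial C_j$: for suitable $b<d$ one has $M_{X_j,\partial C_j}\paren{b,d}=1$, but $\paren{X_j}_d\cup\paren{\partial C_j}_d$ deformation retracts onto $\paren{\partial C_j}_d$, so the quotient by $\paren{\partial C_j}_d$ has trivial reduced homology and the class dies there. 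Your direct-sum count in the quotient then undercounts $\sum_j M_{X_j,\partial C_j}\paren{b,d}$ and the argument does not close. This is not a peripheral case: the paper needs $i=m-1$ (for instance $\PH_1$ in $\R^2$, Theorem~\ref{thm_R2}, and the spheres constructed in Lemma~\ref{lemma_lower_1} can have dimension $m-1$), and it is exactly why the paper splits off $i=m-1$ and handles it by Alexander duality, counting the bounded complementary components of $\paren{X_j}_b$ that survive to scale $d$; these components lie in the interiors of the distinct cubes and are therefore automatically independent, with no collapsing needed. For $i<m-1$ your argument is fine and is equivalent to the paper's, which phrases the same vanishing as the injectivity of the Mayer--Vietoris map out of $H_i\paren{\paren{\partial C}_d}=0$. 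The remaining details you defer (that $\paren{X_j}_d\cap\paren{X_k}_d\subseteq T_d$ for $j\neq k$, so the quotient really splits) do follow from the hypothesis $C_j\cap C_k\subseteq\partial C_j$ by a short convexity argument, so that part is only a matter of writing it out.
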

\begin{proof}

Let $k\in \set{1,\ldots,n},$ $S=\cup_{j=1}^{k-1} X_j,$ $T=\cup_{j=1}^{n} \partial C_j,$ $X=X_k,$ and $C=C_k.$ See Figure~\ref{fig_LA}.

\begin{figure}
\centering
\includegraphics[width=.7\textwidth]{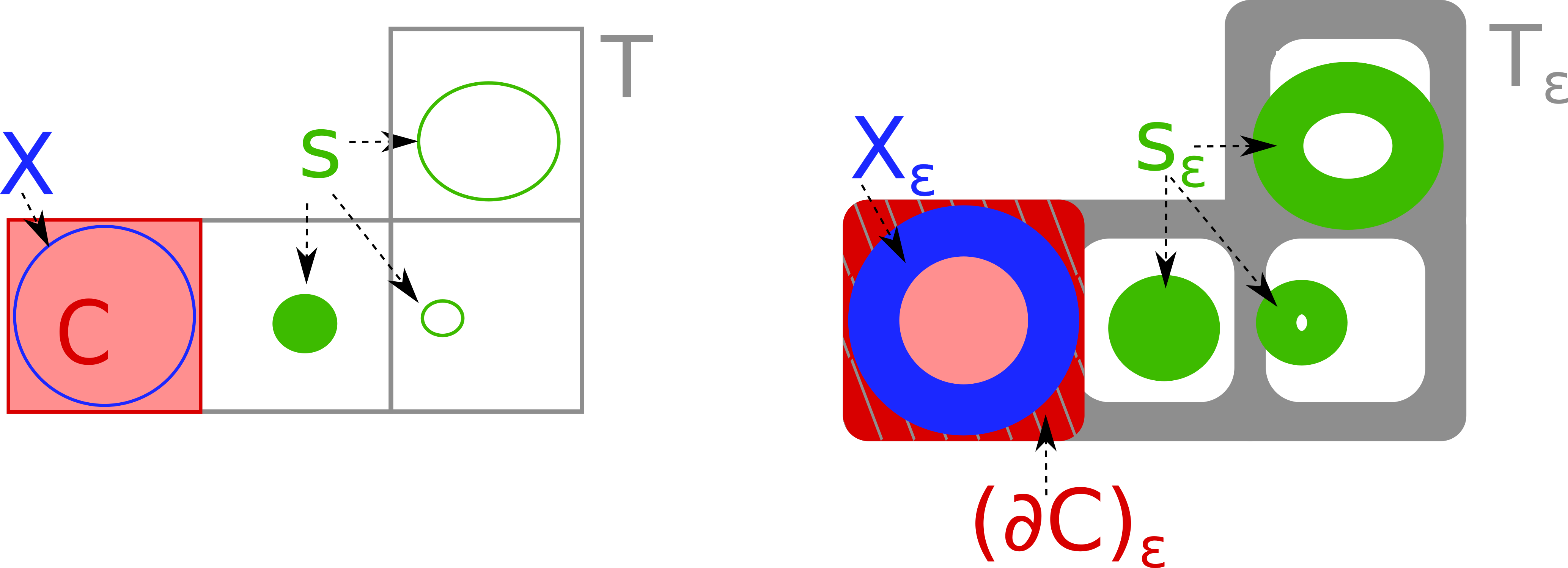}
\caption{The setup in the proof of Lemma~\ref{lemma_SA2}.}
\label{fig_LA}
\end{figure}

We consider the cases $i=m-1$ and $i<m-1$ separately. If $i=m-1,$ Alexander Duality implies that $N_{S}\paren{b,d}$ is the number of bounded components of the complement of $\paren{S_b}$ that intersect non-trivially with the complement of $S_d.$ Similarly,  $M_{X,C}\paren{b,d}$ is the number of bounded components of the complement of $X_b$ that intersect non-trivially with $\paren{X_d\cup \paren{\partial C }_{d}}^c.$ Note that all bounded components of $\paren{X_b}^c$ are contained within the interior of $C,$ because $C$ is convex and separates $\R^m$ into two components.

Let $Y$ be a component of the complement of $X_b$ that intersects non-trivially with $\paren{X_d\cup \paren{\partial C }_{d}}^c,$ and let $y\in Y\cap \paren{X_d \cup \paren{\partial C}_{d}}^c.$ $\partial C$ separates $\R^m$ into two components so
\[d\paren{y, S}\geq d\paren{y, S\cup T} = d\paren{y, X \cup \partial C} > d\]
Therefore,
\[Y\cap \paren{S_d}^c \supseteq Y\cap \paren{S_d\cup T_d}^c = Y\cap \paren{X_d \cup \paren{\partial C}_{d}}^c \neq \varnothing\]
Applying the same argument to each $X_j$ and counting components of the complement yields the desired inequalities.

Otherwise, assume that $i\leq m-1.$ We will show that 
\[ M_{S\cup X, T }\paren{b,d} \geq M_{S, T}\paren{b,d}+M_{X, \partial C}\paren{b,d}\]
and the desired result will follow by induction.  Note that 
\[X_\epsilon\cap S_\epsilon \subseteq X_\epsilon \cap \paren{S_\epsilon \cup T_\epsilon} \subseteq \paren{\partial C}_{\epsilon}\]
for any $\epsilon>0.$ Consider the following commutative diagram of inclusion homomorphisms and Mayer-Vietoris sequences:
\begin{center}
\begin{tikzcd}
H_i\paren{X_b \cap S_b} \arrow[r] \arrow[d] & H_i\paren{X_b}\oplus H_i\paren{S_b} \arrow[r,"\alpha_{b}+\beta_b"] \arrow[d,"\phi \oplus \psi"] & H_i\paren{X_b \cup S_b} \arrow[d,"\zeta"]\\
0= H_i\paren{\paren{\partial C}_{d}} \arrow[r] & H_i\paren{X_d\cup \paren{\partial C}_d}\oplus H_i\paren{S_d \cup T_d} \arrow[r,"\alpha_{d}+\beta_d"] & H_i\paren{X_d\cup S_d \cup T_d} \\
\end{tikzcd}
\end{center}
Observe that $M_{X,\partial C}\paren{b,d}=\rank\,\phi,$ $M_{S,T}\paren{b,d}=\rank\,\psi,$ and  $M_{X\cup S,T}\paren{b,d}=\rank\,\zeta.$  It follows that 
\begin{align*}
M_{X\cup S,T}\paren{b,d}= &\\
&\;\; \rank \zeta \\
\geq &  \;\; \rank\,\paren{\alpha_d+\beta_d} \circ  \paren{\phi \oplus \psi}\\
= & \;\; \rank\,\paren{\phi \oplus \psi} &&\text{because $H_i\paren{\paren{\partial C}_{d}} =0$} \\
=  & \;\;\rank\,\phi + \rank\,\psi\\
= & \;\; M_{X,\partial C}\paren{b,d} +M_{S,T}\paren{b,d}\\
\geq & \;\; \sum_{j=1}^k M_{X_j,\partial C_j}\paren{b,d} &&\text{by induction}
\end{align*}
\end{proof}

\subsection{Occupancy Events}

If $B$ is a subset of an $m$-space, define the occupancy event
\[\delta\paren{B,\textbf{x}}=
\begin{cases} 
0 & \abs{\textbf{x}\cap B}=0\\
1 & \abs{\textbf{x}\cap B}>0
\end{cases}
\]

Also, if $\set{A_i}_{i=1}^r$ and $\set{B_j}_{j=1}^s,$ are collections of  subsets of $M$, let
\[\xi\paren{\textbf{x},\set{A_i},\set{B_j}}=\begin{cases}
 1 & \delta\paren{A_i,\textbf{x}}=0 \text { and }\delta\paren{B_j,\textbf{x}}=1\qquad \forall\, i,j\\
 0 & \text{otherwise}
\end{cases}\]

\begin{Lemma}
\label{lemma_weak_uniform}
Let $\mu$ be a locally bounded probability measure on an $m$-space $M.$ There exists a real number $V_0>0$ so for any $r,s\in\N$ there there exists a  real number $\gamma_0>0$ so that for any collections of disjoint, congruent cubes$\set{A_i^k}$ and $\set{B_j^k},$ for $i\in\set{1,\ldots,r},$ $j\in\set{1,\ldots,s},$ and $k\in\set{1,\ldots,n}$ (for a total of $\paren{r+s}n$ cubes) with volume

\[\vol\paren{A_i^k}=\vol\paren{B_j^k}=V_0/n \qquad \forall \, i,j,k\]
then
\[\mathbb{P}\paren{\sum_{k=1}^n \xi\paren{\textbf{x},\set{A_i^k},\set{B_j^k}} \geq s}\geq \gamma_0\]
for all sufficiently large $n.$ 
\end{Lemma}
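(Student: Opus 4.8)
The plan is to reduce the statement to a single-cell estimate and then invoke independence across the $n$ disjoint blocks of cubes. First I would fix $V_0$ to be a constant small enough that the cubes $A_i^k, B_j^k$ of volume $V_0/n$ can, for every $n$, be chosen to lie inside the region $\phi_M(A)$ on which $\mu$ is comparable to normalized volume (this is possible because $A$ has positive $m$-dimensional volume, so for large $n$ it contains $r+s$ disjoint cubes of volume $V_0/n$ each; to get this for \emph{all} $n$ one shrinks $V_0$ accordingly, or states the conclusion for $n$ sufficiently large as the lemma already does). The point of the local boundedness hypothesis (Definition~\ref{defn_locally_bdd}) is that on such cubes $\mu(B)$ is sandwiched between $a_0 \vol(B)/\vol(\Delta_M)$ and $a_1\vol(B)/\vol(\Delta_M)$; pulling back through the bi-Lipschitz map $\phi_M$ only changes volumes by bounded factors, so there is a single constant $c_1\in(0,1)$, depending only on $M$ and $\mu$, with $\mu(\phi_M(Q)) \geq c_1 V_0/n$ for each cube $Q$ of volume $V_0/n$ inside $A$, and likewise $\mu(\phi_M(Q))\leq c_2 V_0/n$ for an upper constant $c_2$.

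Next I would compute, for a single index $k$, the probability $p_n := \mathbb{P}(\xi(\textbf{x}_n,\{A_i^k\},\{B_j^k\})=1)$, i.e. the probability that all $r$ cubes $A_i^k$ receive no sample point while all $s$ cubes $B_j^k$ receive at least one. Condition on which of the $n$ sample points land outside $\bigcup_{i}A_i^k$: since the $A_i^k$ have total $\mu$-mass at most $r c_2 V_0/n$, the event $\{\text{all }A_i^k\text{ empty}\}$ has probability $(1 - \sum_i \mu(\phi_M(A_i^k)))^n \geq (1 - r c_2 V_0/n)^n \geq e^{-2rc_2 V_0}$ for large $n$. Given that event, the $n$ points are i.i.d. from $\mu$ conditioned on the complement of $\bigcup_i A_i^k$; each $B_j^k$ still has conditional mass at least $\tfrac{c_1 V_0/n}{1}$ (the conditioning only inflates masses), so by inclusion--exclusion / a union bound the conditional probability that some fixed $B_j^k$ is empty is at most $(1-c_1V_0/n)^n \leq e^{-c_1 V_0/2}$ for large $n$, and hence the conditional probability that \emph{all} $s$ of them are nonempty is at least $1 - s e^{-c_1 V_0/2}$. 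Here is where the only real freedom lies: I would now choose $V_0$ large enough (it only entered as an upper bound on cube volume, and we may take it as large as we like subject to fitting $r+s$ cubes in $A$, but since $r,s$ are given \emph{after} $V_0$ this is the delicate ordering point) — wait: the lemma demands $V_0$ \emph{before} $r,s$, so instead of enlarging $V_0$ I would not try to make each single-cell probability close to $1$; I would only need $p_n \geq \beta$ for some fixed $\beta=\beta(r,s,V_0)>0$, which the bounds above already give, e.g. $p_n \geq e^{-2rc_2V_0}(1 - s e^{-c_1 V_0/2})$ provided $V_0$ was chosen once and for all large enough that $s e^{-c_1 V_0/2} < 1/2$ — and if a fixed $V_0$ does not beat every possible $s$, replace the naive union bound by noting the $B_j^k$ are disjoint so the events ``$B_j^k$ nonempty'' are negatively associated, giving $\mathbb{P}(\text{all nonempty}) \geq \prod_j(1-e^{-c_1V_0/2}) \geq (1-e^{-c_1V_0/2})^s$, still a fixed positive constant $\beta(r,s,V_0)$.

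Having shown $\mathbb{P}(\xi_k = 1)\geq \beta > 0$ for each $k$, with $\beta$ independent of $n$, I would finish by a concentration argument on $S_n := \sum_{k=1}^n \xi(\textbf{x},\{A_i^k\},\{B_j^k\})$. The subtlety is that the $\xi_k$ are \emph{not} independent, since all $n$ sample points are shared among all $(r+s)n$ cubes. The clean fix is to use a different, genuinely independent model: partition the $n$ points into $n$ groups of size $1$ is too weak, so instead I would observe that $\xi_k$ depends only on the restriction of the point process to $\bigcup_{i}A_i^k \cup \bigcup_j B_j^k$, and these regions are pairwise disjoint across $k$; conditioning on the (multinomial) allocation of the $n$ points to the $n+1$ blocks $\{\text{block }k\}_{k=1}^n \cup \{\text{rest}\}$, the $\xi_k$ become conditionally independent, each still a Bernoulli with success probability $\geq \beta'$ once a block receives $\Theta(1)$ points, which happens for a positive fraction of blocks with overwhelming probability. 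Then $\mathbb{E}[S_n] \geq \beta'' n$ and a second-moment (Chebyshev) or Chernoff bound on the conditionally-independent sum gives $\mathbb{P}(S_n \geq s) \to 1$, in particular $\geq \gamma_0$ for a fixed $\gamma_0>0$ and all large $n$, which is stronger than required. I expect the main obstacle to be exactly this dependence bookkeeping — making precise the conditioning that decouples the cells while keeping the per-cell success probability bounded below by a constant uniform in $n$; everything else is a routine Poisson-type approximation $(1-c/n)^n \to e^{-c}$ combined with the local boundedness sandwich.
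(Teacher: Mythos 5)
Your overall architecture is the natural one (and, since the paper itself only defers to Lemma 3 of \cite{2018schweinhart_c}, essentially the only one available): use local boundedness to sandwich the $\mu$-mass of each cube between $c_1V_0/n$ and $c_2V_0/n$, bound the per-block success probability $p_n=\mathbb{P}\paren{\xi_k=1}$ below by a constant $\beta\paren{r,s,V_0}>0$, and convert $\mathbb{E}\paren{S_n}\geq\beta n$ into the stated probability bound. You also correctly flag the quantifier order ($V_0$ is fixed before $r,s$). The genuine gap is in your treatment of the case $s e^{-c_1V_0}\geq 1$, where the union bound over the events ``$B_j^k$ empty'' is vacuous. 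Your proposed repair --- that negative association of the occupancy counts yields $\mathbb{P}\paren{\text{all }B_j^k\text{ nonempty}}\geq\prod_j\paren{1-e^{-c_1V_0/2}}$ --- is an inequality in the wrong direction: negative association gives $\mathbb{E}\paren{\prod_j f_j}\leq\prod_j\mathbb{E}\paren{f_j}$ for increasing functions of disjoint blocks of variables, so it supplies an \emph{upper} bound on $\mathbb{P}\paren{\text{all nonempty}}$, and no product lower bound holds in general (one sample point, two boxes of mass $1/2$: the left side is $0$, the right side is $1/4$). The correct replacement is elementary but must actually be carried out: by inclusion--exclusion over which of the $B_j^k$ are empty, $p_n=\sum_{T\subseteq\set{1,\ldots,s}}\paren{-1}^{\abs{T}}\paren{1-\mu\paren{\cup_iA_i^k}-\mu\paren{\cup_{j\in T}B_j^k}}^n$, and since every mass involved lies in $\brac{c_1V_0/n,\,c_2V_0/n}$ this converges, uniformly over admissible configurations, to a product of Poisson void and occupancy probabilities bounded below by $e^{-rc_2V_0}\paren{1-e^{-c_1V_0}}^s>0$. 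That is the ``routine Poisson-type approximation'' you allude to at the end, but it is the load-bearing step, not a remark; as written, your per-block bound only holds when $s$ is small relative to $V_0$, which the quantifier order does not permit you to assume.

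On the finish: once $p_n\geq\beta>0$ uniformly in $k$ for large $n$, the stated conclusion needs much less than your conditional-independence and Chernoff machinery. Since $S_n\leq n$ and $\mathbb{E}\paren{S_n}\geq\beta n$ by linearity of expectation (no independence required), one gets $\mathbb{P}\paren{S_n>\beta n/2}\geq\beta/2$, hence $\mathbb{P}\paren{S_n\geq s}\geq\beta/2\eqqcolon\gamma_0$ for all $n>2s/\beta$. Your stronger claim that $S_n\geq\beta'' n$ with probability tending to one is in fact what Lemma~\ref{lemma_lower_2} uses downstream (it dominates $\sum_k\xi_k$ by a binomial $B\paren{K_n,\gamma_0}$), so the decoupling-by-conditioning-on-the-multinomial-allocation scheme is worth retaining; but note that conditionally on a block receiving $N_k=t$ points, the success probability is bounded below only for $t$ in a \emph{bounded} window $\brac{s,T}$ (very large $t$ makes emptiness of the $A_i^k$ unlikely), so you must also argue that a positive fraction of blocks receive between $s$ and $T$ points, which again comes down to the same Poisson estimate.
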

\begin{proof}
The proof is nearly identical to that of Lemma 3 in~\cite{2018schweinhart_c}.
\end{proof}

\begin{Lemma}
\label{lemma_lower_1}
Let $0<b<d<1/6,$ and $V_0>0.$ There exists a $\lambda_0>0$ so that if $C\subset\R^m$ is an $m$-dimensional cube of width $R$ and $\lambda>\lambda_0,$  there exist disjoint, congruent cubes $\set{A_j}$ and $\set{B_k}$ of width $R\paren{V_0/\lambda}^{\frac{1}{m}}$ so that
\[\xi\paren{\textbf{x},\set{A_j},\set{B_k}} =1 \implies M_{\textbf{x},\partial C}\paren{R b,R d}>0 \]
\end{Lemma}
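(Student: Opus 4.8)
The goal is to realize a nontrivial class in $M_{\textbf{x},\partial C}(Rb,Rd)$ by forcing the $\epsilon$-neighborhood of $\textbf{x}$ to contain, at scale $\epsilon = Rb$, a cycle that is not yet a boundary and that survives (does not become a boundary, even after adjoining $(\partial C)_{Rd}$) up to scale $\epsilon = Rd$. The natural model is a small ``spherical shell'' configuration: place points so that $\textbf{x}_{Rb}$ contains a topological $S^i$ (an $i$-cycle), with the points arranged around an $(i+1)$-dimensional ``hole'' of linear size comparable to $R(d-b)$, so that the hole is not filled in until scale $\approx Rd$. Concretely, I would work inside a subcube of $C$ of width $cR$ for a small dimensional constant $c$ (chosen so that everything stays at distance $>Rd$ from $\partial C$, using $d<1/6$), and inside it fix a finite ``witness'' point configuration $P\subset\R^m$ that generates a persistent $i$-cycle living over $(b',d')$ with $0<b'<b<d<d'$, after rescaling by $R$. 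The cubes $\set{A_j}$ will be small cubes covering the region that must stay \emph{empty} for the cycle not to be born too early or filled too late; the cubes $\set{B_k}$ will be small cubes around the sample points of $P$ that must be \emph{occupied} so the cycle is actually present. The occupancy event $\xi(\textbf{x},\set{A_j},\set{B_k})=1$ then pins $\textbf{x}$ close enough to $P$ (in Hausdorff distance) that a standard stability/nerve argument transfers the persistence statement from $P$ to $\textbf{x}$.

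\textbf{Key steps, in order.} First I would construct the witness configuration: for a given $i<m$, take a geometric realization of the boundary of an $(i+1)$-simplex, or a fine point-sample of a small $i$-sphere of radius $\rho$, scaled so that (a) at neighborhood parameter $b'$ its $\epsilon$-neighborhood deformation-retracts to $S^i$ (the sampled points are dense enough at scale $b'$), and (b) its $\epsilon$-neighborhood does not become contractible until parameter $d'>d$, i.e. the ``cap'' bounded by the $i$-sphere has thickness $>d'$; choosing $\rho$ of order $d-b$ makes this work. Since everything is scale-invariant, multiply all coordinates by $R$ and translate into the chosen subcube of $C$. Second, I would choose $\lambda_0$ and the small cubes: let $\set{B_k}$ be cubes of width $R(V_0/\lambda)^{1/m}$ centered at the (finitely many) points of $RP$, and let $\set{A_j}$ be a cover by cubes of the same width of a slightly shrunk version of the forbidden region $\{x : b' < d(x,RP)/R \text{ would fail}\}$ — more precisely, the set of points that, if sampled, would either create a premature lower-dimensional connection or fill the hole early, intersected with the subcube. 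For $\lambda>\lambda_0$ the cube width is small compared with all the fixed geometric gaps ($b-b'$, $d'-d$, $\rho$), so these cubes can be made disjoint and congruent. Third, assuming $\xi=1$: every $B_k$ is occupied, so $\textbf{x}$ contains a point within $\sqrt{m}\,R(V_0/\lambda)^{1/m}$ of each point of $RP$; every $A_j$ is empty, so $\textbf{x}$ avoids the forbidden region. A Hausdorff-distance estimate plus the persistent nerve lemma (or the interleaving/stability theorem for persistence modules) gives that the inclusion-induced map $H_i((\textbf{x})_{Rb})\to H_i((\textbf{x})_{Rd})$ still has a class coming from the witness $i$-sphere; and because the forbidden region near $\partial C$ is empty and the whole configuration sits at distance $>Rd$ from $\partial C$, adjoining $(\partial C)_{Rd}$ does not kill this class (the $i$-sphere links the same hole in $\R^m\setminus((\textbf{x})_{Rd}\cup(\partial C)_{Rd})$). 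Hence $M_{\textbf{x},\partial C}(Rb,Rd)\ge 1>0$.

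\textbf{Main obstacle.} The delicate point is the interaction with $\partial C$ and, relatedly, making the ``forbidden region'' $\set{A_j}$ genuinely finite and the argument uniform in $R$. For $i=m-1$ this is clean via Alexander duality exactly as in the proof of Lemma~\ref{lemma_additivity}: an $(m-1)$-cycle persists iff it bounds a component of the complement that stays nonempty, and one just needs that component to remain unfilled and disjoint from $(\partial C)_{Rd}$ — which the emptiness of the $A_j$'s together with $d<1/6$ guarantees. For $i<m-1$ one cannot argue component-by-component and must instead certify that a specific $i$-cycle is non-bounding in $(\textbf{x})_{Rd}\cup(\partial C)_{Rd}$; here I would either (i) choose the witness configuration so that the cap filling the $i$-sphere has a well-defined ``diameter direction'' and show any filling chain must pass through a region forced empty by the $A_j$'s, or (ii) invoke the Mayer–Vietoris/commutative-diagram argument already used in Lemma~\ref{lemma_additivity} with $X$ the witness piece and $T=\partial C$, reducing the claim to $H_i((\partial C)_{Rd})=0$ and $\mathrm{rank}\,\phi\ge 1$ for the witness. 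The routine-but-necessary bookkeeping is verifying that finitely many cubes $A_j$ suffice — this works because once the cube width is below a fixed fraction of $\rho$ and of $\min(b-b',d'-d)$, only a bounded (in terms of $m,b,d$) number of small cubes is needed to block all the relevant chains, and that bound does not grow with $R$ by scale invariance.
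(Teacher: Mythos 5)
Your plan is essentially the paper's: a fixed $i$-sphere witness placed well inside $C$, occupancy cubes $\set{B_k}$ along it, forbidden cubes $\set{A_j}$ off it, a Hausdorff-distance/stability transfer to $\textbf{x}$, and separation from $\partial C$ to upgrade $N$ to $M$. Two points need repair. First, the radius: a round $i$-sphere of radius $\rho$ standardly embedded in $\R^m$ has $\PH_i$ equal to the single interval $\paren{0,\rho}$, so to obtain a class alive on all of $\paren{b,d}$ you need $\rho>d$, not $\rho$ of order $d-b$. (Take $b=0.15$, $d=0.16$: your sphere of radius $\approx 0.01$ is filled in long before $\epsilon=d$.) You state the correct requirement $d'>d$ and then choose a $\rho$ that violates it; the paper takes diameter $1/3$, i.e.\ $\rho=1/6>d$, which simultaneously leaves a gap of more than $d$ between the configuration and $\partial C$ since $d<1/6$.

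Second, the ``main obstacle'' you identify (certifying for $i<m-1$ that no chain of $\textbf{x}_{Rd}\cup\paren{\partial C}_{Rd}$ fills the cycle) dissolves once the forbidden set is made concrete in the right way, and this is where your write-up is too vague to be checkable. Take $C'$ a concentric cube of width $\delta\floor{1/\delta}$ with $\delta=\paren{V_0/\lambda}^{1/m}$, subdivide it into \emph{all} $\floor{1/\delta}^{m}$ grid cubes of width $\delta$, and declare $\set{B_k}$ to be the grid cubes meeting $S^i$ and $\set{A_j}$ to be \emph{every} remaining grid cube. Then $\xi=1$ forces $d_H\paren{\textbf{x}\cap C',S^i}<\sqrt{m}\,\delta<\min\paren{b,1/6-d}$ in both directions (every point of $S^i$ is near an occupied cube, and every sample point of $C'$ lies in a cube meeting $S^i$), so bottleneck stability alone produces an interval of $\PH_i\paren{\textbf{x}\cap C'}$ containing $\brac{b,d}$, uniformly in $i$ --- no nerve lemma, no chain-blocking, no case split on $i$. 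A forbidden region consisting only of cubes that ``would fill the hole early'' does not give Hausdorff control of $\textbf{x}\cap C'$ and leaves you with exactly the hard non-bounding certification you were worried about. Finally, for the relative statement you do not need linking or Mayer--Vietoris here: since $\textbf{x}\cap C'$ lies within $\kappa$ of $S^i$, it is at distance more than $2d$ from both $C\setminus C'$ and $\partial C$, so $\paren{\textbf{x}\cap C'}_\epsilon$ is a disjoint open-closed summand of $\textbf{x}_\epsilon\cup\paren{\partial C}_\epsilon$ for all $\epsilon\le d$, the induced maps on $H_i$ are injective, and $M_{\textbf{x},\partial C}\paren{b,d}>0$ follows. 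With these repairs your outline matches the paper's proof.
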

\begin{proof}

\begin{figure}
\centering
\includegraphics[width=.7\textwidth]{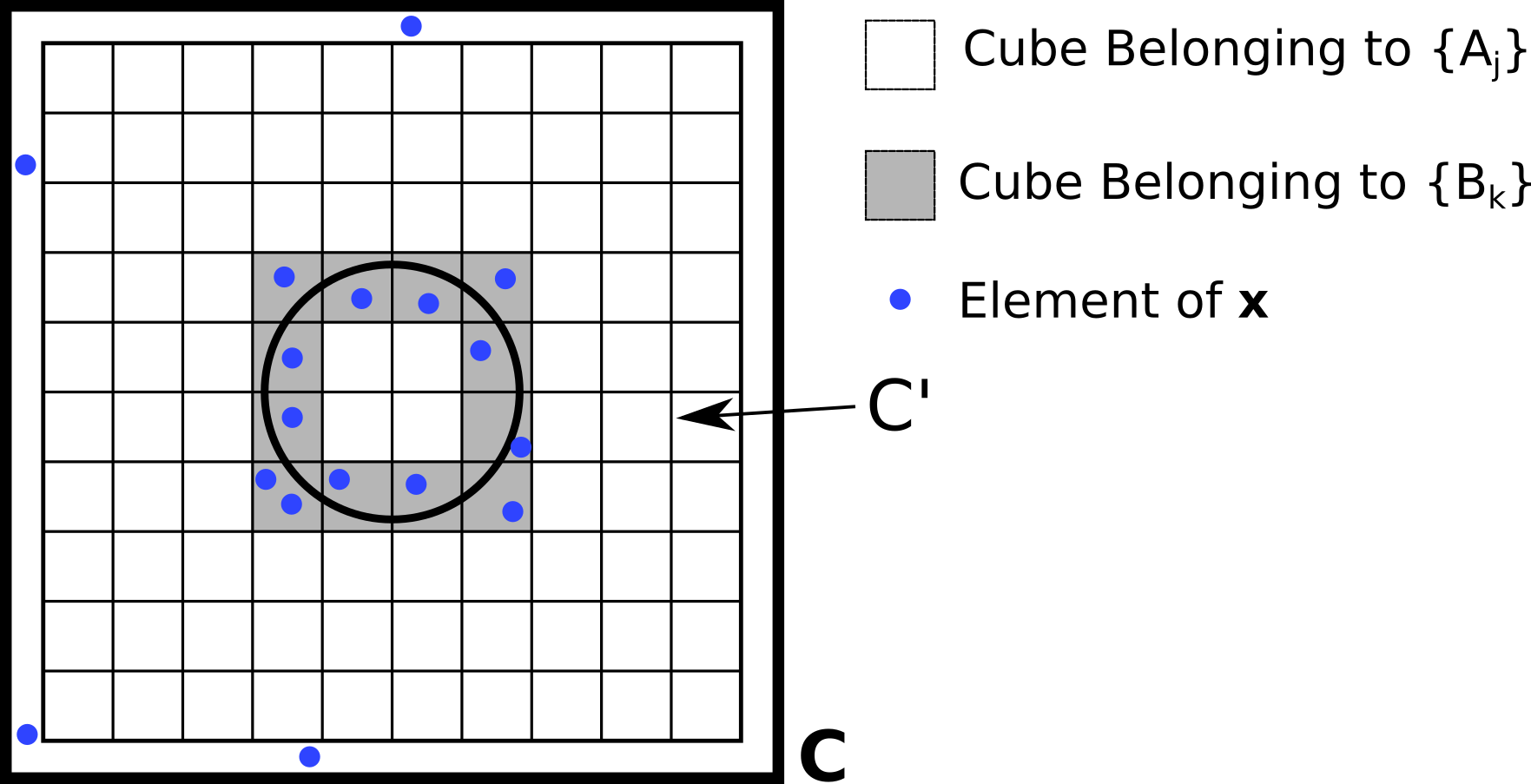}
\caption{The setup in the proof of Lemma~\ref{lemma_lower_1}.}
\label{fig:circlefig}
\end{figure}

We may assume without loss of generality that $R=1$ and $C$ is centered at the origin. Let $S^i\subset \R^m$ be an $i-$dimensional sphere of diameter $1/3$ centered at the origin; note that $\PH_i\paren{S^i}$ consists of a single interval $\paren{0,1/6}.$ 

Let $\kappa=\min\paren{b,1/6-d}$ and $\Delta_0=\kappa/\sqrt{m}.$ 
\[\lim_{\delta\rightarrow 0}\,\delta \floor{1/\delta}=1\]
so there is a real number $\Delta_1>0$ so that $1-\delta \floor{1/\delta}<\kappa$ for all $\delta<\Delta_1.$ Set
\[\lambda_0=\frac{V_0}{\min\paren{\Delta_0,\Delta_1}^m}\]

Choose $\lambda>\lambda_0,$ set $\delta=\paren{V_0/\lambda}^{\frac{1}{m}},$ and let $C'$ be the cube of width $\delta\floor{1/\delta}$ centered at the origin. Subdivide $C'$ into $\floor{1/\delta}^m$ sub-cubes of width $\delta.$ Call this collection of sub-cubes $\set{C_l}$ and let
\[\set{A_j}=\set{c\in\set{C_l}:S^i\cap c=\varnothing} \quad\text{and}\quad \set{B_k}=\set{c\in\set{C_l}:S^i\cap c\neq \varnothing}\]
See Figure~\ref{fig:circlefig} for an illustration. 

If $\textbf{x}\subset C$ and the event $\xi\paren{\textbf{x},\set{A_j},\set{B_k}}$ occurs , then
\[d_H\paren{\textbf{x}\cap C', S^i}<\kappa\]
where $d_H$ is the Hausdorff distance and we used the fact that the diagonal of an $m$-dimensional cube of width $\delta$ is $\delta\sqrt{m}.$  The stability of the bottleneck distance~\cite{2007cohensteiner} implies that $\PH_i\paren{\textbf{x}\cap C'}$ includes an interval $\paren{\hat{b},\hat{d}}$ so that
\[\hat{b}<\kappa\leq b <d \leq 1/3-\kappa < \hat{d}\]
In particular,
\[N_{\textbf{x}\cap C'}\paren{b,d}>0\]

By construction,
\[\frac{1}{2} d\paren{\textbf{x}\cap C',C\setminus C'} > \frac{1}{2}\paren{\frac{1}{6}\sqrt{m}\delta - d\paren{C,C'}} > \frac{1}{6} -\kappa \geq d\]
so the $\epsilon$-neighborhoods of $\textbf{x}\cap C'$ and $C\setminus C'$ are disjoint for all $\epsilon\leq d.$ It follows that the maps on homology induced by the inclusions $\paren{\textbf{x}\cap C'}_{\epsilon} \hookrightarrow \textbf{x}_{\epsilon}$ and $\textbf{x}_{\epsilon}\hookrightarrow \textbf{x}_{\epsilon}\cup \paren{\partial C}_\epsilon$ are injective for all $\epsilon\leq d.$ Therefore, $M_{\textbf{x},\partial C}\paren{b,d}>0,$ as desired.
\end{proof}

\subsection{Proof of the Lower Bound}
In the remainder, let $\mu$ be a locally bounded probability measure on an $m$-space $M,$ let $\set{x_j}_{j\in\N}$ be i.i.d. samples from $\mu,$ and let $\textbf{x}_n=\set{x_1,\ldots,x_n}.$ Also, let $C$ be as in Lemma~\ref{lemma_weak_uniform}, and rescale $\Delta_M$ if necessary so that $C$ is a unit cube. Finally, let $\lambda_0$ be as in Lemma~\ref{lemma_lower_1}.

\subsubsection{The Euclidean Case}
For clarity, we first consider the special case where $\phi_M$ is the identity map, and $\mu$ is a locally bounded probability measure on a compact Euclidean simplicial complex. The argument for the general case contains many of the same elements.

\begin{Lemma}
\label{lemma_lower_2}
Let $0<b_0<d_0<1/6.$ If $n_0>\lambda_0,$ there is a $\gamma_1>0$ so that 
\[\lim_{n\rightarrow\infty} \frac{1}{n} N_{\textbf{x}_n}\paren{\paren{\frac{n_0}{n}}^{\frac{1}{m}}b_0,\paren{\frac{n_0}{n}}^{\frac{1}{m}}d_0}> \gamma_1 \]
in probability.
\end{Lemma}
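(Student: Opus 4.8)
The plan is to combine the super-additivity of the $M_{X,\partial C}$ functionals (Lemma~\ref{lemma_additivity}), the geometric occupancy construction of Lemma~\ref{lemma_lower_1}, and the probabilistic lower bound of Lemma~\ref{lemma_weak_uniform} via a subdivision (bootstrapping) argument. Fix $0<b_0<d_0<1/6$ and an integer $n_0>\lambda_0$. Given $n$, write $N=\floor{n/n_0}$ (so $N$ scales like $n/n_0$) and subdivide the unit cube $C$ into $N$ congruent sub-cubes $C_1,\dots,C_N$ of width $R=N^{-1/m}$, which are interior-disjoint so that the hypothesis $C_j\cap C_k\subset\partial C_j$ of Lemma~\ref{lemma_additivity} holds. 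Applying Lemma~\ref{lemma_lower_1} to each $C_k$ with $\lambda=n_0$ (legitimate since $n_0>\lambda_0$) produces, inside each $C_k$, disjoint congruent cubes $\set{A^k_j}$ and $\set{B^k_j}$ of width $R\,(V_0/n_0)^{1/m}$, hence of volume $R^m V_0/n_0 = V_0/(N n_0)$, which is comparable to $V_0/n$ up to the fixed constant $n_0$ — this is exactly the volume regime Lemma~\ref{lemma_weak_uniform} is designed for (one absorbs the factor $n_0$ by adjusting $V_0$, or by noting the lemma is stated for "sufficiently large $n$").

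Next I would chain the three inputs. For each $k$, Lemma~\ref{lemma_lower_1} gives
\[\xi\paren{\textbf{x}_n,\set{A^k_j},\set{B^k_j}}=1 \implies M_{\textbf{x}_n\cap C_k,\partial C_k}\paren{R b_0,R d_0}>0,\]
and since $M$ takes integer values, $\xi=1$ forces $M_{\textbf{x}_n\cap C_k,\partial C_k}(Rb_0,Rd_0)\geq 1$. Summing over $k$ and invoking the super-additivity chain of Lemma~\ref{lemma_additivity} (with $X_k=\textbf{x}_n\cap C_k$),
\[N_{\textbf{x}_n}\paren{R b_0,R d_0}\;\geq\; M_{\textbf{x}_n,\cup_k\partial C_k}\paren{R b_0,R d_0}\;\geq\;\sum_{k=1}^N M_{\textbf{x}_n\cap C_k,\partial C_k}\paren{R b_0,R d_0}\;\geq\;\sum_{k=1}^N \xi\paren{\textbf{x}_n,\set{A^k_j},\set{B^k_j}}.\]
Now $R b_0 = N^{-1/m} b_0$ and $N\sim n/n_0$, so $R b_0$ is asymptotically $(n_0/n)^{1/m}b_0$ (and likewise for $d_0$); after handling the floor in $N=\floor{n/n_0}$ by a harmless monotonicity/rescaling adjustment of $b_0,d_0$, this is precisely the argument appearing in the statement. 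It remains to lower-bound $\frac{1}{n}\sum_{k=1}^N \xi$ in probability.

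For the probabilistic step I would apply Lemma~\ref{lemma_weak_uniform} repeatedly in blocks. The lemma, with parameters $r=|\set{A^k_j}|$ and $s=|\set{B^k_j}|$ (both fixed, depending only on $b_0,d_0,m$ through the construction in Lemma~\ref{lemma_lower_1}), guarantees that among any $n$ of the index-$k$ slots, at least $s$ satisfy $\xi=1$ with probability $\geq\gamma_0>0$. To upgrade this to a linear-in-$n$ lower bound I would partition the $N$ sub-cubes into $\Theta(N)$ groups of a fixed size, note that the events in disjoint groups depend on disjoint cubes — but the samples $x_1,\dots,x_n$ are shared, so the groups are not independent. The cleanest fix is to use a second moment / Chebyshev argument: compute $\mathbb{E}\brac{\sum_k \xi_k}$, which is $\geq c N$ for a constant $c>0$ (each $\xi_k=1$ event has probability bounded below, because the relevant cubes have volume $\Theta(1/n)$ and $N\Theta(1/n)=\Theta(1/n_0)$ is bounded, so by inclusion–exclusion the probability that a given cluster of $O(1)$ cubes of total volume $O(1/n)$ is "correctly occupied" is bounded below uniformly in $n$), and bound $\mathrm{Var}\paren{\sum_k\xi_k}$ using that $\xi_k$ and $\xi_{k'}$ are negatively associated or at worst weakly correlated (occupancy of disjoint cubes by a multinomial sample). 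Then Chebyshev gives $\sum_k\xi_k \geq \frac{c}{2}N$ with probability tending to $1$, hence $\frac{1}{n}N_{\textbf{x}_n}(\cdots)\geq \frac{c}{2}\frac{N}{n}\to \frac{c}{2 n_0}=:\gamma_1>0$ in probability.

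I expect the main obstacle to be the probabilistic step, specifically getting a \emph{linear} (not merely positive-probability) lower bound on $\sum_k \xi_k$ while the underlying point sample is shared across all $N$ sub-cubes. Lemma~\ref{lemma_weak_uniform} as stated only yields a constant-probability event over $n$ slots, so turning it into concentration requires either a genuine second-moment estimate controlling the (weak, negative-type) correlations among occupancy indicators of disjoint small cubes, or a clever re-grouping into near-independent blocks. Everything else — the subdivision geometry, the floor bookkeeping in $N=\floor{n/n_0}$, and the algebraic manipulation turning $Rb_0$ into $(n_0/n)^{1/m}b_0$ — is routine, and the super-additivity and single-cube geometry are already done in Lemmas~\ref{lemma_additivity} and~\ref{lemma_lower_1}.
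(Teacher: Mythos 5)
Your proposal is correct, and its deterministic core --- tiling the unit cube by $\Theta\paren{n/n_0}$ sub-cubes of width $\approx\paren{n_0/n}^{1/m}$, applying Lemma~\ref{lemma_lower_1} in each to convert the occupancy event $\xi$ into $M_{\textbf{x}_n\cap C_k,\partial C_k}\paren{\cdot,\cdot}\geq 1$, and summing via the super-additivity chain of Lemma~\ref{lemma_additivity} --- is exactly the paper's argument (the paper tiles $\R^m$ by cubes of width $\omega=\paren{n_0/n}^{1/m}$ and keeps the $K_n\approx n/n_0$ cubes lying inside $C$, which sidesteps your floor/rescaling adjustment but is otherwise identical). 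Where you genuinely diverge is the probabilistic step. The paper asserts that $\sum_{l}\xi_l$ stochastically dominates a $B\paren{K_n,\gamma_0}$ random variable, citing Lemma~\ref{lemma_weak_uniform}, and concludes by the weak law for binomials; as you correctly observe, the literal statement of Lemma~\ref{lemma_weak_uniform} (a constant-probability bound on the event $\sum_k\xi_k\geq s$) does not deliver this, so the paper is implicitly relying on a stronger per-block or conditional version of that lemma hidden in the citation to its companion paper. Your second-moment repair is sound and self-contained: $\mathbb{E}\paren{\xi_k}\geq c>0$ because each block involves a bounded number of cubes of $\mu$-measure $\Theta\paren{1/n}$, and for disjoint blocks a direct multinomial computation gives $\abs{\mathrm{Cov}\paren{\xi_k,\xi_{k'}}}=O\paren{1/n}$, so $\mathrm{Var}\paren{\sum_k\xi_k}=O\paren{n}=o\paren{N^2}$ and Chebyshev yields $\sum_k\xi_k\geq cN/2$ with probability tending to one. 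One caveat: do not lean on negative association, since $\xi_k$ is increasing in the $B$-occupancies and decreasing in the $A$-occupancies and hence is not a monotone function of the occupancy vector; the explicit $O\paren{1/n}$ covariance bound is what you should actually prove, and it is what makes the argument go through. Net effect: your route buys a linear-in-$n$ lower bound without invoking the unstated stochastic-domination form of Lemma~\ref{lemma_weak_uniform}, at the cost of one elementary covariance estimate.
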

\begin{proof}

Let $V_0$ be as in Definition~\ref{lemma_weak_uniform}, and let $r=\abs{A_i}$ and $s=\abs{B_j},$ where $\set{A_i}$ and $\set{B_j}$ are as in the previous lemma. 

Assuming $n>n_0,$ let $\omega=\paren{\frac{n_0}{n}}^{\frac{1}{m}}.$ Subdivide $\R^m$ into cubes of width $\omega,$ and let $\set{D_l}_{l=1}^{K_n}$ be the cubes that are fully contained in $C.$ Note that 
\[K_n \coloneqq \abs{\set{D_l}}\approx n/n_0\]

By the previous lemma, there are collections of disjoint, congruent sub-cubes $\set{A^l_1,\ldots,A^l_r}$ and $\set{B^l_1,\ldots,B^l_s}$ of width $\omega \paren{V_0/n_0}^{\frac{1}{m}}$ contained inside each cube $D_l$ so that
\[\xi\paren{\textbf{x}_n,\set{A^l_i},\set{B^l_j}}=1\implies M_{\textbf{x}_n\cap D_l,\partial D_l}\paren{\omega b_0,\omega d_0}>0 \]

Note that
\begin{align*}
N_{\textbf{x}_n}\paren{\omega b_0,\omega d_0}\geq & \\
&\;\; \sum_{l=1}^{K_n} M_{\textbf{x}_n\cap D_l,\partial D_l}\paren{\omega b_0,\omega d_0} &&\text{by Lemma~\ref{lemma_additivity}}\\
\geq & \;\; \sum_{l=1}^{K_n} \xi\paren{\textbf{x}_n,\set{A^l_i},\set{B^l_j}}
\end{align*}

Let $\gamma_0$ be as in Lemma~\ref{lemma_weak_uniform} and $\gamma<\gamma_0/n_0.$ Set 
\[\delta=\frac{1+\gamma \frac{n_0}{\gamma_0}}{2} \quad\text{and}\quad \epsilon=\frac{1-\delta}{\delta}\] 
so $1/2<\delta<1$ and $0<\epsilon<1.$ Also, find a $N$ so that $K_n>\delta n/n_0$ for all $n>N.$ Note that
\begin{equation}
\label{eqn_binomialLemma}
\gamma n=\frac{\gamma_0 \delta n}{n_0}\paren{1-\frac{1-\delta}{\delta}}<\paren{1-\epsilon}\gamma_0 K_n
\end{equation}
for all $n>N.$ Therefore, if $n>N,$ 
\begin{align*}
\mathbb{P}\paren{ N_{\textbf{x}_n}\paren{\omega b_0,\omega d_0} > \gamma n}\geq & \\
&\;\; \mathbb{P}\paren{\sum_{l=1}^{K_n} \xi\paren{\textbf{x}_n,\set{A^l_i},\set{B^l_j}} > \gamma n}\\
\geq & \;\;\mathbb{P}\paren{B\paren{K_n,\gamma_0}>\gamma n} &&\text{by Lemma~\ref{lemma_weak_uniform}}\\
\geq & \;\; \mathbb{P}\paren{B\paren{K_n,\gamma_0}>\paren{1-\epsilon}\gamma_0 K_n} &&\text{by Equation~\ref{eqn_binomialLemma}}
\end{align*}
which converges to $1$ as $n\rightarrow\infty.$
\end{proof}

We can now prove the lower bound in the Euclidean setting:
\begin{Proposition}
\label{prop_lower_1}
There is a $\gamma'>0$ so that 

\[\lim_{n\rightarrow\infty} n^{-\frac{m-\alpha}{m}} E_\alpha^i\paren{\textbf{x}_n}\geq  \gamma' \]
in probability.
\end{Proposition}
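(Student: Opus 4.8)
The plan is to lower-bound $E_\alpha^i\paren{\textbf{x}_n}$ by retaining only the long persistent homology intervals whose existence is guaranteed by Lemma~\ref{lemma_lower_2}, and then to choose the scale at which that lemma is applied so that the exponents match the normalization $n^{-\paren{m-\alpha}/m}$.

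First I would fix, once and for all, real numbers $0<b_0<d_0<1/6$ (for instance $b_0=1/12$ and $d_0=1/7$) together with an integer $n_0>\lambda_0$, and let $\gamma_1>0$ be the constant that Lemma~\ref{lemma_lower_2} produces for these choices. Write $\omega_n=\paren{n_0/n}^{1/m}$. Every interval $\paren{b,d}\in\PH_i\paren{\textbf{x}_n}$ counted by $N_{\textbf{x}_n}\paren{\omega_n b_0,\omega_n d_0}$ satisfies $b<\omega_n b_0$ and $d>\omega_n d_0$, hence has length $d-b>\omega_n\paren{d_0-b_0}$; since $0<\alpha<m$ and $t\mapsto t^{\alpha}$ is increasing on $\paren{0,\infty}$, we get $\paren{d-b}^{\alpha}>\omega_n^{\alpha}\paren{d_0-b_0}^{\alpha}$. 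Discarding every other interval from the sum defining $E_\alpha^i\paren{\textbf{x}_n}$ then yields the pointwise bound
\[E_\alpha^i\paren{\textbf{x}_n}\;\geq\; N_{\textbf{x}_n}\paren{\omega_n b_0,\omega_n d_0}\,\omega_n^{\alpha}\paren{d_0-b_0}^{\alpha}.\]

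Next I would invoke Lemma~\ref{lemma_lower_2}: with probability tending to $1$ one has $N_{\textbf{x}_n}\paren{\omega_n b_0,\omega_n d_0}>\gamma_1 n$, and on that event
\[E_\alpha^i\paren{\textbf{x}_n}\;>\;\gamma_1\paren{d_0-b_0}^{\alpha}\,n\,\omega_n^{\alpha}\;=\;\gamma_1\paren{d_0-b_0}^{\alpha}\,n_0^{\alpha/m}\,n^{1-\alpha/m}\;=\;\gamma_1\paren{d_0-b_0}^{\alpha}\,n_0^{\alpha/m}\,n^{\frac{m-\alpha}{m}},\]
using $n\,\omega_n^{\alpha}=n\paren{n_0/n}^{\alpha/m}=n_0^{\alpha/m}\,n^{\paren{m-\alpha}/m}$. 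Dividing by $n^{\paren{m-\alpha}/m}$ and setting $\gamma'=\gamma_1\paren{d_0-b_0}^{\alpha}n_0^{\alpha/m}>0$, we conclude that $\mathbb{P}\paren{n^{-\paren{m-\alpha}/m}E_\alpha^i\paren{\textbf{x}_n}>\gamma'}\to 1$, which is the assertion (reading ``$\lim\geq\gamma'$ in probability'' as $\mathbb{P}\paren{n^{-\paren{m-\alpha}/m}E_\alpha^i\paren{\textbf{x}_n}\geq\gamma'-\varepsilon}\to 1$ for every $\varepsilon>0$, which then holds a fortiori).

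This last step is short precisely because all the substance already resides in Lemmas~\ref{lemma_additivity}, \ref{lemma_lower_1}, and~\ref{lemma_lower_2}. The only point that needs attention is the exponent bookkeeping: the cancellation $n\,\omega_n^{\alpha}=n_0^{\alpha/m}\,n^{\paren{m-\alpha}/m}$ is exactly what singles out $n^{-\paren{m-\alpha}/m}$ as the right normalization, and I would also note that it is harmless to treat $b_0$, $d_0$, and $n_0$ as fixed absolute constants, since $\gamma'$ then depends on them but not on $n$. I do not anticipate a genuine obstacle in carrying this out.
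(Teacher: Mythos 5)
Your proposal is correct and follows essentially the same route as the paper: both arguments discard all intervals except those counted by $N_{\textbf{x}_n}\paren{\omega b_0,\omega d_0}$, bound each retained interval's $\alpha$-power by $\omega^{\alpha}\paren{d_0-b_0}^{\alpha}$, and then invoke Lemma~\ref{lemma_lower_2} together with the cancellation $n\,\omega^{\alpha}=n_0^{\alpha/m}n^{\paren{m-\alpha}/m}$ to obtain $\gamma'=\gamma_1\paren{d_0-b_0}^{\alpha}n_0^{\alpha/m}$. Your write-up actually makes explicit the interval-length justification that the paper leaves implicit, but there is no substantive difference.
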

\begin{proof}
Let $0<b<d<1/6,$ and let $n_0>\lambda_0$ and $\gamma_1$ be as before. Also, let $\omega=\paren{\frac{n_0}{n}}^{1/m}.$ We have that

\begin{align*}
\lim_{n\rightarrow\infty}  n^{-\frac{m-\alpha}{m}} E_\alpha^i\paren{\textbf{x}_n}\geq&\\
& \;\; \lim_{n\rightarrow\infty}   n^{-\frac{m-\alpha}{m}} \paren{\omega d- \omega b}^\alpha N_{\textbf{x}_n}\paren{\omega b,\omega d}\\
= & \;\; \lim_{n\rightarrow\infty} \frac{n_0^{\alpha/m}}{n}   \paren{d-b }^\alpha N_{\textbf{x}_n}\paren{\omega b,\omega d}\\
\geq &  \;\;  n_0^{\alpha/m} \paren{d-b}^\alpha \gamma_1  &&\text{by Lemma~\ref{lemma_lower_2}}\\
\coloneqq & \;\; \gamma'
\end{align*}
in probability.
\end{proof}

\subsubsection{The General Case}
Before proving the lower bound in our main theorem, we require an interleaving result for the persistent homology of images of bi-Lipschitz maps:

\begin{Lemma}
\label{lemma_interleave}
Let $M_0$ and $M_1$ be metric spaces and let $\psi:M_0\rightarrow M_1$ be $L$-bilipshitz. If $X\subset M_0$ and $0\leq b_0 < d_0$
\[N_X\paren{b_0/L,L d_0}\leq N_{\psi\paren{X}}\paren{b_0,d_0}\leq N_{X}\paren{L b_0,d_0/L}\] 
\end{Lemma}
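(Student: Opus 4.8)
The plan is to prove the bi-Lipschitz interleaving for the rank functions $N_X(b,d)$ directly from the definitions, mirroring the standard stability/interleaving argument. Recall that $N_X(b,d)$ counts the intervals of $\PH_i(X)$ with birth strictly less than $b$ and death strictly greater than $d$, which by the characterization recalled in the Persistent Homology subsection equals $\rank\paren{i_{b,d}:H_i(X_b)\to H_i(X_d)}$ (up to the usual care with open versus half-open intervals, which only affects endpoints and is harmless here since the parameters we plug in are generic). So it suffices to compare the ranks of the inclusion-induced maps on the neighborhood filtrations of $X$ and $\psi(X)$.

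The key observation is the metric distortion estimate: if $\psi$ is $L$-bi-Lipschitz, meaning $L^{-1}d(x,y)\le d(\psi(x),\psi(y))\le L\,d(x,y)$, then for any $\epsilon>0$ one has the inclusions
\begin{equation*}
\psi\paren{X_{\epsilon/L}} \subseteq \paren{\psi(X)}_{\epsilon} \subseteq \psi\paren{X_{L\epsilon}}.
\end{equation*}
Indeed, a point $y$ with $d(y,\psi(X))<\epsilon$ need not lie in $\psi(M_0)$ in general, so the second inclusion requires a little care — this is the main obstacle I anticipate. The clean fix is to note that the statement only concerns $X$, $\psi(X)$ and neighborhoods taken inside the respective ambient spaces; since $\psi$ is a bi-Lipschitz \emph{homeomorphism onto its image} (injective, as $L^{-1}d(x,y)\le d(\psi(x),\psi(y))$ forces injectivity), one should interpret the neighborhoods of $\psi(X)$ as taken within $M_1$ and use that $\psi$ induces a bi-Lipschitz identification $X_\epsilon \cong$ a subset of $M_1$ whose $\epsilon'$-neighborhoods are sandwiched as above. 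Concretely: for $y\in\paren{\psi(X)}_\epsilon$ there is $x\in X$ with $d(y,\psi(x))<\epsilon$; this does not immediately give $y=\psi(x')$ for some $x'\in X_{L\epsilon}$, so instead I will prove the homology-level statement by composing the filtration inclusions of $X$ through $\psi$ and back, using only the first inclusion $\psi(X_{\epsilon/L})\subseteq (\psi(X))_\epsilon$ together with its mirror image $\psi^{-1}\paren{(\psi(X))_\epsilon}\subseteq X_{L\epsilon}$ (valid on the image of $\psi$), which is what actually gets used.

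Granting the sandwich, the proof is a diagram chase. For the right-hand inequality $N_{\psi(X)}(b_0,d_0)\le N_X(Lb_0,d_0/L)$: the composite of inclusions
\begin{equation*}
\psi\paren{X_{b_0/L}} \hookrightarrow \paren{\psi(X)}_{b_0} \hookrightarrow \paren{\psi(X)}_{d_0} \hookrightarrow \psi\paren{X_{Ld_0}}
\end{equation*}
shows, after applying $H_i$ and using that $\psi$ induces isomorphisms $H_i(\psi(X_\epsilon))\cong H_i(X_\epsilon)$ (it is a homeomorphism onto its image), that $\rank\paren{H_i(X_{b_0/L})\to H_i(X_{Ld_0})}$ factors through $\rank\paren{H_i((\psi(X))_{b_0})\to H_i((\psi(X))_{d_0})}$. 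Wait — I need the inequality in the other direction, so I will instead factor the $\psi(X)$-map through the $X$-map with the \emph{widened} window: $(\psi(X))_{b_0}\supseteq \psi(X_{?})$ is the wrong direction, so the correct chain is $\psi(X_{Lb_0})\supseteq(\psi(X))_{b_0}$ — no. Let me restate cleanly: from the sandwich, $\paren{\psi(X)}_{b_0}\subseteq \psi(X_{Lb_0})$ and $\psi(X_{d_0/L})\subseteq\paren{\psi(X)}_{d_0}$; hence the map $H_i((\psi(X))_{b_0})\to H_i((\psi(X))_{d_0})$ contains in its image the image of $H_i(\psi(X_{d_0/L}))$ is the wrong way again. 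The honest statement is that the map $H_i((\psi(X))_{b_0})\to H_i((\psi(X))_{d_0})$ sits inside a commuting square with $H_i(\psi(X_{Lb_0}))\to H_i(\psi(X_{d_0/L}))$ is impossible since $Lb_0$ could exceed $d_0/L$ — unless $b_0<d_0$, and even then. I will resolve the bookkeeping carefully in the writeup using the factorization $X_{b_0/L}\subseteq \psi^{-1}((\psi X)_{b_0})\subseteq X_{Lb_0}$ and $X_{d_0/L}\subseteq\psi^{-1}((\psi X)_{d_0})\subseteq X_{L d_0}$, extracting both inequalities by the standard fact that in a chain of maps $A\to B\to C\to D\to E$ the ranks satisfy $\rank(A\to E)\le \rank(B\to D)$; applying this once with $(A,B,C,D,E)=(X_{b_0/L},\,\psi^{-1}(\psi X)_{b_0},\,?,\,\psi^{-1}(\psi X)_{d_0},\,X_{Ld_0})$ gives the left inequality $N_X(b_0/L,Ld_0)\le N_{\psi(X)}(b_0,d_0)$ directly, and the mirror application with the roles of the two filtrations swapped (using $\psi^{-1}$, which is $L$-bi-Lipschitz on $\psi(M_0)$) gives the right inequality. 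The only real work is getting the neighborhood-inclusion sandwich right and being scrupulous about open versus closed neighborhoods; everything else is the monotonicity of rank along a composition.
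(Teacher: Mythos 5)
Your proposal is correct in substance and follows essentially the same route as the paper: the bi-Lipschitz distortion bounds give the inclusions $\psi\paren{X_{b_0/L}}\subseteq \paren{\psi\paren{X}}_{b_0}$ and $\psi^{-1}\paren{\paren{\psi\paren{X}}_{d_0}}\subseteq X_{Ld_0}$, and each inequality is read off from monotonicity of rank along the resulting composite of inclusion-induced maps (the paper simply writes the chain $\psi\paren{X_{b_0/L}}\hookrightarrow\paren{\psi\paren{X}}_{b_0}\hookrightarrow\paren{\psi\paren{X}}_{d_0}\hookrightarrow\psi\paren{X_{Ld_0}}$ and invokes that $\psi$ is a homeomorphism). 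The surjectivity subtlety you flag --- a point of $\paren{\psi\paren{X}}_{\epsilon}$ need not lie in $\psi\paren{M_0}$ --- is genuine but is equally present in the paper's own proof and is harmless in the application, where $\psi=\phi_M$ is onto; once the directional bookkeeping is cleaned up (and one notes the right-hand inequality is only meaningful when $Lb_0<d_0/L$), your argument coincides with the paper's.
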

\begin{proof}
Fix $i\in\N,$ and let $j_{\epsilon_0,\epsilon_1}:X_{\epsilon_0}\hookrightarrow X_{\epsilon_1}$ and  $k_{\epsilon_0,\epsilon_1}:\phi\paren{X}_{\epsilon_0}\hookrightarrow\phi\paren{X}_{\epsilon_1}$ denote the inclusion maps for $\epsilon_0\leq \epsilon_1.$ 

By the definition of a bi-Lipschitz map
\[\frac{1}{L} d_{M_0}\paren{x,y}\leq d_{M_1}\paren{\psi\paren{x},\psi\paren{y}}\leq L d_{M_0}\paren{x,y}\]
for all $x,y\in M_0.$ In particular, we have the following inclusions:
\[\psi\paren{X_{b_0/L}} \hookrightarrow \psi\paren{X}_{b_0}  \hookrightarrow \psi\paren{X}_{d_0} \hookrightarrow \psi\paren{X_{L d_0}}\]

It follows that the rank of map on homology induced by $i_{b_0/L,Ld_0}$ is less than or equal to the rank of the map induced by $j_{b_0,d_0}$ (where we have used that a bi-Lipschitz map is a homeomorphism). Therefore, 
\[N_{X}\paren{b_0/L,L d_0}\leq N_{\psi\paren{X}}\paren{b_0,d_0}\]

The argument for the other inequality is very similar.
\end{proof}

\begin{Proposition}
\label{prop_lower}
Let $\mu$ be a locally bounded probability measure on an $m$-space $M$ and $0\leq i<m.$ There is a $\gamma>0$ so that 
\[\lim_{n\rightarrow\infty} n^{-\frac{m-\alpha}{m}} E_\alpha^i\paren{x_1,\ldots x_n}> \gamma \]
in probability
\end{Proposition}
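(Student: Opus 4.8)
The plan is to reduce the general (bi-Lipschitz) case to the Euclidean case already handled in Proposition~\ref{prop_lower_1}, using the interleaving Lemma~\ref{lemma_interleave}. Recall that $M = \phi_M(\Delta_M)$ where $\phi_M$ is $L$-bi-Lipschitz for some $L\geq 1$, and that local boundedness gives a set $A\subseteq\Delta_M$ of positive $m$-volume on which $\mu\circ\phi_M$ is comparable to Lebesgue measure. First I would pass to a single small Euclidean cube: since $A$ has positive volume, it contains (up to a set of measure zero) an axis-aligned cube $C_0\subset\Delta_M$, and after rescaling $\Delta_M$ we may take $C_0$ to be a unit cube, exactly the setup feeding Lemma~\ref{lemma_weak_uniform} and Lemma~\ref{lemma_lower_1}. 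The conditional distribution of a sample point given that it lands in $C_0$ is then comparable to the uniform distribution on $C_0$, with density bounded above and below by $a_0, a_1$; this is precisely the hypothesis needed to run the occupancy-event machinery.

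The core of the argument is to re-derive the analogue of Lemma~\ref{lemma_lower_2} in $M$. The subcubes $\{A^l_i\},\{B^l_j\}$ from Lemma~\ref{lemma_lower_1} live in $\Delta_M$; their images under $\phi_M$ are no longer cubes, but the occupancy events $\xi(\mathbf{x},\{A^l_i\},\{B^l_j\})$ only depend on which measurable sets contain sample points, and $\mu(\phi_M(A^l_i))$, $\mu(\phi_M(B^l_j))$ are each comparable (within the factors $a_0,a_1$ and the bi-Lipschitz distortion $L^m$) to $V_0/n$. So Lemma~\ref{lemma_weak_uniform} applies to the images $\{\phi_M(A^l_i)\},\{\phi_M(B^l_j)\}$ after adjusting the constant $V_0$, giving the same positive-probability lower bound $\gamma_0$ on $\sum_l \xi \geq s$. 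Next, the implication from Lemma~\ref{lemma_lower_1}, $\xi=1\implies M_{\mathbf{x}_n\cap D_l,\partial D_l}(\omega b_0,\omega d_0)>0$, holds in $\Delta_M$; applying Lemma~\ref{lemma_interleave} with $\psi=\phi_M$ transports this to a statement $M_{\phi_M(\mathbf{x}_n\cap D_l),\phi_M(\partial D_l)}(\omega b_0/L, L\omega d_0)>0$ in $M$ — here I use that $M_{X,T}$ is, like $N_X$, controlled from below under an $L$-bi-Lipschitz map by the same interleaving of neighborhoods (the inclusions $\psi(X_{b/L})\hookrightarrow\psi(X)_b\hookrightarrow\psi(X)_d\hookrightarrow\psi(X_{Ld})$ compose with $\psi(X)_d\hookrightarrow\psi(X)_d\cup\psi(T)_d$). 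The disjointness hypothesis of Lemma~\ref{lemma_additivity} is preserved because $\phi_M$ is a homeomorphism carrying $\partial D_l$ to the topological boundary of $\phi_M(D_l)$ and interiors to interiors, so the $\phi_M(D_l)$ meet only along boundaries; thus the super-additivity $N_{\phi_M(\mathbf{x}_n)}(\omega b_0/L,L\omega d_0)\geq\sum_l \xi$ still holds. Combining as in Lemma~\ref{lemma_lower_2} yields $\lim_{n\to\infty}\frac1n N_{\phi_M(\mathbf{x}_n)}\big((n_0/n)^{1/m}b_0/L,\,L(n_0/n)^{1/m}d_0\big)>\gamma_1$ in probability for suitable $b_0<d_0$ with $b_0/L<Ld_0<1/6$ (shrink $b_0,d_0$ if needed).

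Finally I would repeat the computation in the proof of Proposition~\ref{prop_lower_1}: writing $\omega=(n_0/n)^{1/m}$,
\[
n^{-\frac{m-\alpha}{m}}E_\alpha^i(\mathbf{x}_n)\geq n^{-\frac{m-\alpha}{m}}\big(L\omega d_0-\tfrac{\omega b_0}{L}\big)^\alpha N_{\phi_M(\mathbf{x}_n)}\big(\tfrac{\omega b_0}{L},L\omega d_0\big)=\frac{n_0^{\alpha/m}}{n}\big(Ld_0-\tfrac{b_0}{L}\big)^\alpha N_{\phi_M(\mathbf{x}_n)}\big(\tfrac{\omega b_0}{L},L\omega d_0\big),
\]
which by the transported Lemma~\ref{lemma_lower_2} is $\geq n_0^{\alpha/m}(Ld_0-b_0/L)^\alpha\gamma_1=:\gamma>0$ in the limit, in probability. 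I expect the main obstacle to be bookkeeping rather than anything deep: making sure that the bi-Lipschitz distortion is absorbed consistently everywhere — in the volume comparisons that feed $V_0$ in Lemma~\ref{lemma_weak_uniform}, in the interleaving shifts $b_0/L$ and $Ld_0$ that must still satisfy the $<1/6$ constraint of Lemma~\ref{lemma_lower_1}, and in verifying that $M_{X,T}$ (not just $N_X$) behaves correctly under Lemma~\ref{lemma_interleave}, since that lemma is stated only for $N_X$ and I would need its straightforward $M_{X,T}$ analogue. One also has to check that the positive-volume set $A$ genuinely contains a cube and that restricting the measure to that cube is legitimate — since occupancy events and the super-additivity bound only ever see points inside $C$, ignoring the mass of $\mu$ outside $\phi_M(C_0)$ can only decrease $N$, so a lower bound survives.
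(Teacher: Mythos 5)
Your overall plan --- reduce to the Euclidean case via the bi-Lipschitz map and the interleaving lemma --- is the paper's strategy, but the order in which you apply the transfer creates a genuine gap. You push each local quantity $M_{\mathbf{x}_n\cap D_l,\partial D_l}$ forward to $M$ via $\phi_M$ and then invoke Lemma~\ref{lemma_additivity} for the images $\phi_M(D_l)$ to sum them. Lemma~\ref{lemma_additivity} is stated and proved only for genuine $m$-dimensional cubes in $\R^m$: the case $i=m-1$ uses Alexander duality in $\R^m$ and the convexity of the cubes, and the case $i<m-1$ uses $H_i((\partial C)_d)=0$ in the Mayer--Vietoris argument, which holds because the $d$-neighborhood of a cube boundary in $\R^m$ retracts to a sphere. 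None of this survives pushing forward: $\phi_M(D_l)$ is not convex, $M$ is not $\R^m$ (so there is no Alexander duality), and the $d$-neighborhood \emph{in $M$} of $\phi_M(\partial D_l)$ need not have vanishing $H_i$. Observing that the images still meet only along their boundaries addresses the hypothesis of the lemma but not its proof. There is also a direction error in your use of Lemma~\ref{lemma_interleave}: knowing an interval of the domain spans $(\omega b_0,\omega d_0)$ yields an interval of the image spanning the \emph{narrower} range $(\omega Lb_0,\omega d_0/L)$, not the wider range $(\omega b_0/L, L\omega d_0)$ that you write; this is why one must choose $L^2 b_0 < d_0$ at the outset so that the transported interval is nonempty.

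Both problems disappear if you reorder the argument the way the paper does: work entirely with the pulled-back points $\mathbf{y}_n=\phi_M^{-1}(\mathbf{x}_n)$ in $\Delta_M$, where Lemma~\ref{lemma_lower_2} applies verbatim (the occupancy probabilities are unchanged, since $\xi$ depends only on which sets are hit and $\mu\circ\phi_M$ is comparable to volume), obtaining $N_{\mathbf{y}_n}(\omega b,\omega d)\geq \gamma_1 n$ with high probability; then apply Lemma~\ref{lemma_interleave} exactly once to the global count, $N_{\mathbf{x}_n}(\omega Lb,\omega d/L)\geq N_{\mathbf{y}_n}(\omega b,\omega d)$, with $b,d$ chosen so that $L^2b<d<1/6$. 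This also eliminates the need for the unproved $M_{X,T}$-analogue of the interleaving lemma, since only $N$ is ever transported between the two spaces.
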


\begin{proof}
Let $L$ be the bi-Lipschitz constant of $\phi_M,$ and choose $b,d>0$ so that 
\[L^2 b < d <1/6\]
Set 
\[n_0=\max\paren{\paren{d/L-Lb}^{-m},n_0}\]
 so 
\begin{equation} 
\label{prop_lower_eqn_2}
n_0^{\frac{1}{m}}\paren{d/L-L b}\geq 1
\end{equation}

Let $\omega=\paren{\frac{n_0}{n}}^{\frac{1}{m}}$ and $\textbf{y}_n=\phi_m^{-1}\paren{\textbf{x}_n}.$ Our strategy is to bound $E_\alpha^i\paren{\textbf{x}_n}$ by applying Lemma~\ref{lemma_lower_2} to $\textbf{y}_n.$  

First, 
\begin{align*}
E_\alpha^i\paren{\textbf{x}_n} &\\
& \;\; \paren{\omega \paren{d/L-Lb}}^\alpha N_{\textbf{x}_n}\paren{\omega Lb, \omega d/L}\\
\geq & \;\;n^{-\alpha/m} N_{\textbf{x}_n}\paren{\omega Lb, \omega d/L} &&\text{by Equation~\ref{prop_lower_eqn_2}}\\
\geq & \;\; n^{-\alpha/m} N_{\textbf{y}_n}\paren{\omega b, \omega d}&&\text{by Lemma~\ref{lemma_interleave}}\\
= & \;\;  n^{-\alpha/m} N_{\textbf{y}_n}\paren{\omega b, \omega d}
\end{align*}

Therefore, 
\[\lim_{n\rightarrow\infty} n^{-\frac{m-\alpha}{m}} E_\alpha^i\paren{\textbf{x}_n} \geq \lim_{n\rightarrow\infty} \frac{1}{n}  N_{\textbf{y}_n}\paren{\omega b, \omega d}> \gamma_1\]
in probability, where $\gamma_1>0$ is as given in Lemma~\ref{lemma_lower_2}.

\end{proof}

\nocite{2018bauer}
\nocite{2017bobrowski}

\bibliographystyle{plain}
\bibliography{Bibliography}

\end{document}